\newtheorem{thm}{Theorem}[section]
\newtheorem*{mainthm}{Main Theorem}
\newtheorem{lemma}[thm]{Lemma}
\newtheorem{cor}[thm]{Corollary}
\newtheorem{claim}{Claim}[thm]
\newtheorem{prop}[thm]{Proposition}
\newtheorem{fact}[thm]{Fact}
\theoremstyle{definition}
\newtheorem{defn}[thm]{Definition}
\newtheorem{notation}[thm]{Notation}
\theoremstyle{remark}
\newtheorem{remark}[thm]{Remark}
\newtheorem{conv}[thm]{Convention}
\renewcommand{\mid}{\mathrel{|}\allowbreak}
\renewcommand{\restriction}{\mathbin\upharpoonright}
\DeclareMathOperator{\crit}{crit}
\DeclareMathOperator{\acc}{acc}
\DeclareMathOperator{\ad}{AD}
\DeclareMathOperator{\nacc}{nacc}
\DeclareMathOperator{\dom}{dom}
\DeclareMathOperator{\im}{Im}
\DeclareMathOperator{\h}{ht}
\newcommand\s{\subseteq}
\newcommand\supp{{\circledcirc}}
\def\br{\blacktriangleright}
\def\pII{\textrm{\bf II}}
\newcounter{oldenumi}
\title{A new model for all $C$-sequences are trivial}
\author{Assaf Rinot}
\address{Department of Mathematics, Bar-Ilan University, Ramat-Gan 5290002, Israel.}
\urladdr{http://www.assafrinot.com}
\author{Zhixing You}
\address{Department of Mathematics, Bar-Ilan University, Ramat-Gan 5290002, Israel}
\email{zhixingy121@gmail.com}
\author{Jiachen Yuan}
\address{UK}
\email{yuan.jachen@gmail.com}
\keywords{Souslin tree, mutually exclusive ascent path, vanishing levels, $C$-sequence number, $\kappa$-chain condition, intersection model}
\subjclass[2010]{Primary 03E35. Secondary 03E05, 03E55}
\begin{document}
\begin{abstract} We construct a model in which all $C$-sequences are trivial,
yet there exists a $\kappa$-Souslin tree with full vanishing levels.
This answers a question from \cite{paper58}, and provides an optimal combination of compactness and incompactness.
It is obtained by incorporating a so-called \emph{mutually exclusive ascent path} to Kunen's original forcing construction.
\end{abstract}
\date{Preprint as of \today. For updates, visit \textsf{http://p.assafrinot.com/70}.}
\maketitle

\section{introduction}
Motivated by a characterization of weak compactness in terms of $C$-sequences
due to Todor\v{c}evi\'c \cite[Theorem 1.8]{TodActa},
Lambie-Hanson and Rinot \cite[Definition 1.6]{paper35}
introduced a new cardinal characteristic, \emph{the $C$-sequence number},
to measure the compactness of a regular uncountable cardinal $\kappa$, where the best case $\chi(\kappa)=0$ amounts to saying that $\kappa$ is weakly compact.
This notion is quite useful, for example, if the $C$-sequence number of a strongly inaccessible $\kappa$ is bigger than $1$ (i.e., $\chi(\kappa)>1$),
then the $\kappa$-chain condition is not infinitely productive (see \cite[Lemma~3.3]{paper34} and \cite[Lemma~5.8]{paper35}),
and there is a $\kappa$-Aronszajn tree with no ascent path of width less than $\chi(\kappa)$ (see \cite[Lemma~7.9]{paper71}).\footnote{Undefined terminology can be found in Section~\ref{sp}.}

A simple way to argue that $\chi(\kappa)=1$ holds in a given model is to
prove that the weak compactness of $\kappa$ can be resurrected via some {$\kappa$-cc} notion of forcing.
As a sole example, Kunen's model \cite[\S3]{Kunen} satisfies this requirement.

In \cite{paper48}, Rinot and Shalev put forward the importance of the vanishing levels $V(T)$ of a $\kappa$-Souslin tree $T$,
deriving from it instances of the guessing principle $\clubsuit_{\ad}$
and using it to solve problems in set-theoretic topology.
They proved \cite[Theorem~2.30]{paper48} that for $\kappa$ weakly compact, $\clubsuit_{\ad}$ fails over every club in $\kappa$.
In addition, if $\clubsuit_{\ad}$ holds over a club in $\kappa$, then $\kappa$ is immediately seen to be non-subtle. This raises the question of what large cardinal notions are compatible with $\clubsuit_{\ad}$ holding over a club.
The best known result in this vein is \cite[Theorem~E]{paper58} asserting that assuming the consistency of a weakly compact cardinal,
it is consistent that for some strongly inaccessible cardinal $\kappa$ satisfying $\chi(\kappa) = \omega$,
$\clubsuit_{\ad}$ holds over a club in $\kappa$, furthermore,
there is a $\kappa$-Souslin tree $T$ such that $V(T) = \acc(\kappa)$.
Whether this result may be improved to $\chi(\kappa)<\omega$ remained an evasive open problem. Here, this remaining case is resolved.

\begin{mainthm}
Suppose that $\kappa$ is a weakly compact cardinal. Then there exists a forcing extension in which $\kappa$ is strongly inaccessible, $\chi(\kappa)=1$
and there exists a $\kappa$-Souslin tree $T$ such that $V(T)=\acc(\kappa)$.
\end{mainthm}

A natural attempt to prove the preceding is to follow Kunen's approach,
this time adding a generic $\kappa$-Souslin tree $T$ such that $V(T)$ covers a club in $\kappa$,
hopefully arguing that the weak compactness of $\kappa$ can be resurrected by a further $\kappa$-cc forcing.
However, in view of the implicit requirement to kill subtle-ness, this demands adding at least $\kappa^+$-many branches through $T$,
which has a negative effect on the chain condition of the further forcing.
Instead, our approach is to produce a model admitting a family of generic elementary embeddings living in further $\kappa$-cc forcing extensions, as follows.

We will start with $\kappa$ a weakly compact cardinal,
carry out some preparatory forcing below $\kappa$, and then force to add a uniformly homogeneous $\kappa$-Souslin subtree $T$ of ${}^{<\kappa}\kappa$
such that $V(T)$ is a club in $\kappa$ and such that $T$ admits an $\mathcal F$-ascent path for an educated choice of a filter $\mathcal F$ over $\omega$.
Our plan is to argue that $\chi(\kappa)=1$ holds in the final model by showing that for every $C$-sequence $\vec C$ over $\kappa$,
a nontrivial elementary embedding $j:M\rightarrow N$ between two $\kappa$-models with $\crit(j)=\kappa$ and $\vec C\in M$
exists in some further $\kappa$-cc forcing extension.
In the inevitable case that our tree $T$ belongs to $M$, it would admit a branch $b:\kappa\rightarrow\kappa$,
as witnessed by any element of the $\kappa^{\text{th}}$-level of $j(T)$.
Meanwhile, since $V(T)$ covers a club, $\kappa\in j(V(T))=V(j(T))$, and hence the tree $j(T)$ would have a vanishing $\kappa$-branch $b':\kappa\rightarrow\kappa$.
As $b$ is non-vanishing and $b'$ is vanishing, the fact that $T$ is uniformly homogeneous
implies that $b$ and $b'$ must disagree cofinally often.
By iterating such an argument, we infer that the tree must have at least $\omega$-many cofinal branches that disagree with each other cofinally often.
Thus, we shall need a notion of forcing that introduces such cofinal branches in a $\kappa$-cc fashion.
This is exactly where the definition of a \emph{mutually exclusive $\mathcal F$-ascent path} arises.
To connect on a previous remark, instead of adding $\kappa^+$-many branches, in our approach only countably many (mutually exclusive ones) are added.

\subsection{Organization of this paper}
In Section~\ref{sp}, we provide some necessary background on trees, ascent paths and $C$-sequences. We motivate the new notion of a \emph{mutually exclusive $\mathcal{F}$-ascent path}
by showing that variations of Kunen's forcing that do not make use of this kind of ascent path are not well behaved.

In Section~\ref{sec3}, we present our main notions of forcing. Large cardinals play no role in this section, and the results are applicable as low as at $\aleph_2$.

In Section~\ref{sect7}, we prove the main theorem by iterating the posets of Section~\ref{sec3} below and at a weakly compact cardinal.

\section{Preliminaries}\label{sp}
\subsection{Trees and vanishing levels}
For simplicity, instead of working with abstract trees, we opt to work with the following more concrete implementation.
A \emph{streamlined tree} is a subset $T\s{}^{<\kappa}H_\kappa$ for some cardinal $\kappa$
that is downward-closed, i.e., for every $t\in T$, $\{ t\restriction \alpha\mid \alpha<\kappa\}\s T$.
The height of a node $x\in T$ is $\dom(x)$. The height of $T$, denoted $\h(T)$, is the least ordinal $\beta$ such that $T_\beta:=\{x\in T\mid \dom(x)=\beta\}$ is empty.
Note that for every ordinal $\beta$, $T\restriction\beta:=\{x\in T\mid \dom(x)<\beta\}$ is as well a streamlined tree.
A streamlined tree $T$ is said to be \emph{normal} iff for every $x\in T$ and every $\alpha<\h(T)$,
there exists some $y\in T_\alpha$ with either $x\s y$ or $y\s x$.
A \emph{streamlined $\kappa$-tree} {\cite[Definition~2.3]{paper23}} is a downward-closed subset $T$ of ${}^{<\kappa}H_\kappa$
satisfying that $0<|T_\alpha|<\kappa$ for every $\alpha<\kappa$.\footnote{In this case, the poset $(T,{\subsetneq})$
is a set-theoretic $\kappa$-tree in the usual abstract sense.}
A \emph{streamlined $\kappa$-Souslin tree} is a streamlined $\kappa$-tree having no $\kappa$-branches and no $\kappa$-sized antichains with respect to the ordering $\subsetneq$.
\begin{conv}
Hereafter, by a tree, we mean a streamlined tree.
\end{conv}

\begin{defn} For all $s,t\in{}^{<\kappa}H_\kappa$, let
$$\Delta(s,t):=\min(\{\dom(s),\dom(t)\}\cup\{\delta\in\dom(s)\cap\dom(t)\mid s(\delta)\neq t(\delta)\}).$$
In addition, we define $(s*t):\dom(t)\rightarrow H_\kappa$ via:
$$(s*t)(\varepsilon):=\begin{cases}s(\varepsilon),&\text{if }\varepsilon\in\dom(s);\\
t(\varepsilon),&\text{otherwise.}\end{cases}$$
\end{defn}
\begin{defn} A $\kappa$-tree $T$ is
\emph{uniformly homogeneous} iff for all $s, t\in T$, $s*t$ is in $T$.
\end{defn}

\begin{defn} Suppose that $T$ is a tree.
An \emph{$\alpha$-branch} is a subset $B\s T$ linearly ordered by $\subsetneq$ and satisfying that $\{ \dom(x)\mid x\in B\}=\alpha$.
For $\alpha\in\acc(\kappa)$, an $\alpha$-branch $B$ is \emph{vanishing} iff $\bigcup B\notin T$.
\end{defn}

\begin{defn}[The levels of vanishing branches, {\cite[Definition~2.18]{paper48}}]\label{defvanish}
For a $\kappa$-tree $T$, $ V(T)$ stands for the set of all $\alpha\in\acc(\kappa)$
such that for every node $x\in T$ of height less than $\alpha$ there exists a vanishing $\alpha$-branch containing $x$.
\end{defn}

In the context of uniformly homogeneous trees, the preceding admits a simpler characterization
(see \cite[Proposition~2.6]{paper58}):

\begin{fact} For a uniformly homogeneous $\kappa$-tree $T$, $V(T)$ coincides with the set of all $\alpha\in\acc(\kappa)$
for which there exists a vanishing $\alpha$-branch.
\end{fact}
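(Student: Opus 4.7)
The plan is to establish the two inclusions. The forward one is immediate from Definition~\ref{defvanish} applied to any node of height less than $\alpha$ (e.g.\ the empty sequence): this already produces a vanishing $\alpha$-branch. For the reverse inclusion, the strategy is to translate a single given vanishing $\alpha$-branch so that it passes through an arbitrarily prescribed node, by means of the $*$-operation.

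Concretely, enumerate the given vanishing branch as $B=\{b_\beta\mid\beta<\alpha\}$ with $b_\beta\in T_\beta$, and set $b:=\bigcup B$, so that $b\notin T$. For any $x\in T$ with $\gamma:=\dom(x)<\alpha$, I form
$$B':=\{x*b_\beta\mid\beta<\alpha\}.$$
Uniform homogeneity yields $B'\s T$. A short bookkeeping check shows that $B'$ is $\subsetneq$-linearly ordered with level set $\alpha$: for $\beta<\beta'$, both $x*b_\beta$ and $x*b_{\beta'}$ coincide with $x$ on $\gamma$ and with $b$ on $[\gamma,\beta)$, so the longer one properly extends the shorter. Since $x*b_\gamma=x$, this branch passes through $x$.

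The main point is to verify that $B'$ is vanishing. Its union $c:=\bigcup B'$ agrees with $x$ on $\gamma$ and with $b$ on $[\gamma,\alpha)$. Suppose toward contradiction that $c\in T$. A second application of uniform homogeneity to $b_\gamma,c\in T$ produces $b_\gamma*c\in T$. But $b_\gamma*c$ equals $b_\gamma$ (hence $b$) on $\gamma$ and equals $c$ (hence $b$) on $[\gamma,\alpha)$, so $b_\gamma*c=b$, forcing $b\in T$ and contradicting the choice of $B$ as vanishing. Hence $c\notin T$, so $B'$ is a vanishing $\alpha$-branch through $x$ and $\alpha\in V(T)$. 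No genuine obstacle is expected; the whole argument reduces to bookkeeping with the $*$-operation, the nontrivial idea being the use of $b_\gamma$ as a ``splice'' that converts $c$ back into $b$.
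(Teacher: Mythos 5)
Your proof is correct, and it is the expected argument: the paper states this as a Fact cited to \cite[Proposition~2.6]{paper58} without reproducing a proof, and the standard proof there is precisely this translation trick. Your two uses of uniform homogeneity --- pushing the vanishing branch through $x$ via $x*b_\beta$, and then splicing with $b_\gamma*c=b$ to show the translated branch is still vanishing --- check out against Definitions~\ref{defvanish} and the definition of $*$.
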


\subsection{Ascent paths}
The notions of forcing in Section~\ref{sec3} below will make use
of the upcoming Definitions \ref{def26}, \ref{def211} and \ref{def212}. To motivate them, we shall inspect here two earlier attempts to define these notions of forcing,
demonstrating the problem with these attempts.

Let $\theta<\kappa$ be a pair of infinite regular cardinals.
We start by recalling the vanilla definition of an ascent path.

\begin{defn}\label{def26}
Suppose that $T$ is a tree, and $\theta$ is some cardinal. For all $f,g \in {}^{\theta}T$,
denote $$\supp(f,g):=\{ \tau<\theta \mid f(\tau) \s g(\tau)\text{ or }f(\tau) \supseteq g(\tau)\}.$$
\end{defn}

\begin{defn}[Laver]
Suppose that $T$ is a tree of some height $\gamma$.
A sequence $\vec f=\langle f_\alpha \mid \alpha < \gamma \rangle$ is a \emph{$\theta$-ascent path} through $T$ iff the following two hold:
\begin{itemize}
\item for every $\alpha < \gamma$, $f_\alpha:\theta \rightarrow T_\alpha$ is a function;
\item for all $\alpha < \beta < \gamma$, $\supp(f_{\alpha},f_{\beta})$ is co-bounded in $\theta$.
\end{itemize}
\end{defn}

A natural notion of forcing for adding a $\kappa$-Souslin tree with a $\theta$-ascent path reads as follows.
\begin{defn}\label{defPkt}
$\mathbb S^\kappa_{\theta}$
is defined to be the notion of forcing consisting of all pairs
$\langle T,\vec{f} \rangle$ for which the following two hold:
\begin{enumerate}[label=(\arabic*)]
\item $T \subseteq {^{<\kappa}}\kappa$ is a normal uniformly homogeneous tree of a successor height
all of whose levels have size less than $\kappa$;
\item $\vec{f}$ is a $\theta$-ascent path through $T$.
\end{enumerate}
The order on $\mathbb S^\kappa_{\theta}$ is defined by taking end-extension on both coordinates.
\end{defn}

Work in $V[G]$ for $G$ an $\mathbb S^\kappa_{\theta}$-generic filter over $V$.
It can be verified that
$T(G):=\bigcup\{T\mid \langle T,\vec{f}\rangle \in G\}$ is a uniformly homogeneous $\kappa$-Souslin tree,
and $\vec{f^G}:=\bigcup\{\vec{f}\mid \langle T,\vec{f}\rangle \in G\}$ is a $\theta$-ascent path through $T(G)$.
Next, consider the following further forcing.
\begin{defn} $\mathbb A^\kappa_{\theta}$ has underlying set $\kappa$,
and ordering
$$\beta\leq_{\mathbb A^\kappa_{\theta}}\alpha \text{ iff }(\alpha\le\beta\ \&\ \supp(\vec{f^G}(\alpha),\vec{f^G}(\beta))=\theta).$$
\end{defn}

\begin{prop} $\mathbb A^\kappa_{\theta}$ does not satisfy the $\kappa$-chain condition.
\end{prop}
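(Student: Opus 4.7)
The plan is to exhibit a $\kappa$-sized antichain in $\mathbb A^\kappa_\theta$ by analysing compatibility as an equivalence relation whose classes are forced to be small by the Souslinness of $T(G)$.

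First, I would unpack the ordering. Any common $\mathbb A^\kappa_\theta$-extension $\gamma$ of ordinals $\alpha \leq \beta$ must satisfy $\vec{f^G}(\alpha)(\tau), \vec{f^G}(\beta)(\tau) \subseteq \vec{f^G}(\gamma)(\tau)$ for every $\tau < \theta$, which, by the tree structure of $T(G)$, forces $\vec{f^G}(\alpha)(\tau) \subseteq \vec{f^G}(\beta)(\tau)$ throughout, i.e.\ $\supp(\vec{f^G}(\alpha), \vec{f^G}(\beta)) = \theta$; conversely, when $\supp = \theta$ already, $\beta$ itself witnesses compatibility. So $\mathbb A^\kappa_\theta$-compatibility coincides with the binary relation $\alpha \sim \beta :\iff \supp(\vec{f^G}(\alpha), \vec{f^G}(\beta)) = \theta$. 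A column-by-column check, using that any three pairwise comparable nodes of a tree are totally ordered, shows that $\sim$ is transitive; reflexivity and symmetry are immediate, so $\sim$ is an equivalence relation on $\kappa$.

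The heart of the argument is then to show that no $\sim$-class $C$ has size $\kappa$. Given such a $C$, I would restrict attention to the single column $\tau = 0$: the set $\{\vec{f^G}(\alpha)(0) : \alpha \in C\}$ consists of nodes of pairwise distinct heights, and the defining property of $\sim$ forces them to be pairwise comparable, hence linearly ordered by $\subsetneq$. Since $|C| = \kappa$ and $\kappa$ is regular, $C$ is cofinal in $\kappa$, so this chain traverses unboundedly many levels of $T(G)$ and thus constitutes a cofinal branch, contradicting the Souslinness of $T(G)$.

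With every $\sim$-class of size $< \kappa$, the regularity of $\kappa$ forces the number of classes to be exactly $\kappa$ (otherwise $\kappa$ would be a union of fewer than $\kappa$-many sets of size $<\kappa$), and a choice function on the classes yields an antichain of size $\kappa$ in $\mathbb A^\kappa_\theta$. The main conceptual step is recognising that $\mathbb A^\kappa_\theta$-compatibility is an equivalence relation; after that, Souslinness of $T(G)$ and regularity of $\kappa$ do all the work.
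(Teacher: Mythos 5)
Your reduction of compatibility to the relation $\alpha\sim\beta\iff\supp(\vec{f^G}(\alpha),\vec{f^G}(\beta))=\theta$ is correct, but the proof breaks at the claim that $\sim$ is transitive. Transitivity holds when the middle ordinal lies between the other two in height, or above both (two nodes of a tree below a common node are comparable), but it fails in the remaining configuration: if $\beta<\alpha$ and $\beta<\gamma$ with $\beta\sim\alpha$ and $\beta\sim\gamma$, then for each column $\tau$ the nodes $\vec{f^G}(\alpha)(\tau)$ and $\vec{f^G}(\gamma)(\tau)$ merely both extend $\vec{f^G}(\beta)(\tau)$, and two nodes \emph{above} a common node need not be comparable. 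The cited fact that pairwise comparable nodes are totally ordered presupposes exactly the comparability you are trying to establish; and genericity guarantees that such splittings really occur, so compatibility is genuinely not an equivalence relation here.

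This is not a repairable presentational slip, because the part of your argument that survives --- that every pairwise compatible subset of $\mathbb A^\kappa_\theta$ has size $<\kappa$, via the cofinal-branch argument on column $0$ --- does not by itself produce a $\kappa$-sized antichain. Passing from ``no $\kappa$-sized clique in the compatibility graph'' to ``a $\kappa$-sized independent set'' is an instance of $\kappa\to(\kappa,\kappa)^2$, i.e.\ essentially weak compactness, which is not assumed in this part of the paper (the constructions are meant to apply as low as $\aleph_2$) and in any case cannot hold in $V[G]$, where $T(G)$ is Souslin. The paper instead builds the antichain directly: a density argument shows there are cofinally many ``bad'' successor levels $\beta=\alpha+1$ at which the columns of $\vec{f^G}(\beta)$ are pairwise distinct and two of them split exactly at $\alpha$; if two such levels $\beta<\beta'$ had a common extension, every column of $\vec{f^G}(\beta)$ would be an initial segment of the corresponding column of $\vec{f^G}(\beta')$, forcing the two witnessing columns of $\beta'$ (which agree below $\alpha'\ge\beta$) to have equal restrictions to $\beta$, contradicting the splitting already present at level $\beta$. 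You would need an argument of this direct, local kind rather than the equivalence-class decomposition.
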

\begin{proof}
Let us say that $\beta<\kappa$ is \emph{bad} iff $\beta=\alpha+1$ is a successor ordinal
and there are $\tau<\tau'<\theta$ such that $\Delta(\vec{f^G}(\beta)(\tau),\vec{f^G}(\beta)(\tau'))=\alpha$.
\begin{claim} There are cofinally many bad $\beta<\kappa$.
\end{claim}
\begin{proof} Back in $V$, given a condition $\langle T,\vec{f}\rangle$ in $\mathbb S^\kappa_\theta$,
fix $\alpha$ such that $\h(T)=\alpha+1$.
Let $T':=T\cup\{ t{}^\smallfrown\langle \iota\rangle\mid t\in T_\alpha, \iota<\theta\}$,
and define $\vec f':\alpha+2\rightarrow{}^\theta{T'}$ via
$$\vec f'(\beta)(\tau):=\begin{cases}
\vec f(\beta)(\tau),&\text{if }\beta\le\alpha;\\
\vec f(\alpha)(\tau){}^\smallfrown\langle\tau\rangle,&\text{if }\beta=\alpha+1\ \&\ \tau\ge2;\\
\vec f(\alpha)(0){}^\smallfrown\langle\tau\rangle,&\text{otherwise}.
\end{cases}$$
Then $\langle T',\vec f'\rangle$ extends $\langle T,\vec f\rangle$ and it forces that $\dom(\vec f')$ is bad.
\end{proof}

As the collection of all bad $\beta<\kappa$ is an antichain in $\mathbb A^\kappa_\theta$,
the latter fails to satisfy that $\kappa$-cc.
\end{proof}

To mitigate the problem arising from the preceding proposition, and in view of the goal of securing a good chain condition, we introduce the next two definitions.
\begin{defn}\label{def211}
Two elements $s,t\in {}^{<\kappa}H_\kappa$ are \emph{mutually exclusive} iff $s(\varepsilon)\neq t(\varepsilon)$ for every $\varepsilon \in \dom(s)\cap \dom(t)$.
\end{defn}

\begin{defn}\label{def212}
Suppose that $T$ is a tree of some height $\gamma$,
and $\mathcal F$ is a filter over $\theta$.
A sequence $\vec f=\langle f_\alpha \mid \alpha < \gamma \rangle$ is a \emph{mutually exclusive $\mathcal{F}$-ascent path} through $T$ iff the following three hold:
\begin{itemize}
\item for every $\alpha < \gamma$, $f_\alpha:\theta \rightarrow T_\alpha$ is a function;
\item for all $\alpha < \beta < \gamma$, $\supp(f_{\alpha},f_{\beta})\in \mathcal{F}$;
\item for every nonzero $\alpha<\gamma$, $\langle f_\alpha(\tau)\mid \tau<\theta\rangle$ consists of mutually exclusive nodes.
\end{itemize}
\end{defn}

Fix some uniform filter $\mathcal F$ over $\theta$, and revise Definition~\ref{defPkt}, as follows.
\begin{defn}
$\mathbb S^\kappa_{\mathcal F}$
is defined to be the notion of forcing consisting of all pairs
$\langle T,\vec{f} \rangle$ for which the following two hold:
\begin{enumerate}[label=(\arabic*)]
\item $T \subseteq {^{<\kappa}}\kappa$ is a normal uniformly homogeneous tree of a successor height
all of whose levels have size less than $\kappa$;
\item $\vec{f}$ is a mutually exclusive $\mathcal F$-ascent path through $T$.
\end{enumerate}
The order on $\mathbb S^\kappa_{\mathcal F}$ is defined by taking end-extension on both coordinates.
\end{defn}

Work in $V[G]$ for $G$ an $\mathbb S^\kappa_{\mathcal F}$-generic filter over $V$.
Define $\mathbb A^\kappa_{\mathcal F}$ to have underlying set $\kappa$,
and ordering
$$\beta\leq_{\mathbb A^\kappa_{\mathcal F}}\alpha \text{ iff }(\alpha\le\beta\ \&\ \supp(\vec{f^G}(\alpha),\vec{f^G}(\beta))\in\mathcal F).$$

This time --- for an educated choice of a filter $\mathcal F$ ---
the corresponding notion of forcing $\mathbb A^\kappa_{\mathcal F}$ does satisfy the $\kappa$-chain condition.
So we are almost done: only a small step away from making sure that $T(G)$ is a $\kappa$-Souslin tree.\footnote{During a talk given by the second author at the Chinese Academy of Sciences, Yinhe Peng demonstrated that $T(G)$ has an antichain of size $\kappa$.}
However, this is a benign problem, since the subtree generated by the ascent path is always a $\kappa$-Souslin tree.

In Section~\ref{sec3}, we shall present a pair $(\mathbb S^\kappa_{\mathbf X},\mathbb A^\kappa_{\mathbf X})$
of notions of forcing that overcome the problems detected in the two earlier attempts $(\mathbb S^\kappa_{\theta},\mathbb A^\kappa_{\theta})$
and $(\mathbb S^\kappa_{\mathcal F}, \mathbb A^\kappa_{\mathcal F})$ considered here.
Specifically, $\mathbb S^\kappa_{\mathbf X}$ will be a ${<}\kappa$-strategically closed forcing that adds a uniformly homogeneous $\kappa$-Souslin tree with a
mutually exclusive $\mathcal{F}$-ascent path
and whose set of vanishing levels is a club in $\kappa$. As well, $\mathbb S^\kappa_{\mathbf X}*\dot{\mathbb A}^\kappa_{\mathbf X}$
will be a ${<}\kappa$-strategically closed forcing, and ${\mathbb A}^\kappa_{\mathbf X}$ will satisfy the $\kappa$-chain condition.

\subsection{$C$-sequences}
A \emph{$C$-sequence} over a regular uncountable cardinal $\kappa$
is a sequence $\vec C=\langle C_\beta\mid\beta<\kappa\rangle$
such that for every $\beta<\kappa$, $C_\beta$ is a closed subset of $\beta$ with $\sup(C_\beta)=\sup(\beta)$.

\begin{defn}[The $C$-sequence number of $\kappa$, \cite{paper35}]\label{defcnm}
If $\kappa$ is weakly compact, then let $\chi(\kappa):=0$. Otherwise, let
$\chi(\kappa)$ denote the least cardinal $\chi\le\kappa$
such that, for every $C$-sequence $\langle C_\beta\mid\beta<\kappa\rangle$,
there exist $\Delta\in[\kappa]^\kappa$ and $b:\kappa\rightarrow[\kappa]^{\chi}$
with $\Delta\cap\alpha\s\bigcup_{\beta\in b(\alpha)}C_\beta$
for every $\alpha<\kappa$.
\end{defn}
\begin{remark}
The special case of $\chi(\kappa)\le1$ is better known as ``all $C$-sequences over $\kappa$ are trivial''.
\end{remark}
To motivate one ingredient of the proof of the main result in Section~\ref{sect7}, we present here an abstract approach for producing models satisfying $\chi(\kappa)=1$.
It may be phrased as a \emph{suspended form of the tree property}, as follows.
\begin{prop}\label{lemma69} Suppose that $\kappa$ is a strongly inaccessible cardinal,
and for every $\kappa$-tree $T$,
there is a $\kappa$-cc forcing extension in which $T$ has a $\kappa$-branch.
Then $\chi(\kappa)\le 1$.
\end{prop}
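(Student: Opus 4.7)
Given a $C$-sequence $\vec{C}=\langle C_\beta\mid\beta<\kappa\rangle$, the plan is to associate to it a natural $\kappa$-tree $T(\vec{C})$ whose $\kappa$-branches encode trivializations, use the hypothesis to obtain a $\kappa$-branch in a $\kappa$-cc forcing extension $V[G]$, and then exploit the chain condition to descend the resulting trivialization back to $V$.

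The natural candidate is the coherent tree $T(\vec{C})\s{}^{<\kappa}2$ whose $\alpha^{\text{th}}$ level consists of the characteristic functions $\chi_{C_\beta\cap\alpha}$ for $\beta<\kappa$. Strong inaccessibility of $\kappa$ gives $|T(\vec{C})_\alpha|\le 2^\alpha<\kappa$; each level is non-empty (it contains $\chi_{C_\alpha}$); and downward closure follows from $\chi_{C_\beta\cap\alpha}\restriction\beta=\chi_{C_\beta\cap\beta}$. Hence $T(\vec{C})$ is a $\kappa$-tree. Applying the hypothesis, some $\kappa$-cc forcing $\mathbb{P}$ adds a $\kappa$-branch $B$, which decodes to a set $D\s\kappa$ in $V[G]$ satisfying, for each $\alpha<\kappa$, $D\cap\alpha=C_{b(\alpha)}\cap\alpha$ for some $b(\alpha)\in V$ (as every $T(\vec{C})_\alpha\s V$). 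In particular $D\cap\alpha\s C_{b(\alpha)}$, so $(D,b)$ trivializes $\vec{C}$ in $V[G]$ provided $D\in[\kappa]^\kappa$. Unboundedness of $D$ is arranged by restricting $T(\vec{C})$ at limit levels $\alpha$ to those nodes $\chi_{C_\beta\cap\alpha}$ with $\sup(C_\beta\cap\alpha)=\alpha$, which remains non-empty via $\chi_{C_\alpha}$ and which admits a downward-closed extension.

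The core of the argument is the descent to $V$. Define $\Delta:=\{\xi<\kappa\mid\Vert\xi\in\dot{D}\Vert_{\mathbb{P}}>0\}\in V$; since $D\s\Delta$ in $V[G]$ and unboundedness of a set of ordinals is absolute, $\Delta\in[\kappa]^\kappa$ in $V$. For each $\alpha<\kappa$, the $\kappa$-chain condition bounds $A_\alpha:=\{\beta<\kappa\mid\exists p\in\mathbb{P},\ p\Vdash\dot{b}(\alpha)=\check\beta\}$ to a cardinal strictly less than $\kappa$. A standard forcing argument---for $\xi\in\Delta\cap\alpha$, find $p$ forcing $\xi\in\dot{D}$, extend it to decide $\dot{b}(\alpha)=\beta$ with $\beta\in A_\alpha$, and read off $\xi\in C_\beta$---yields $\Delta\cap\alpha\s\bigcup_{\beta\in A_\alpha}C_\beta$ in $V$.

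The principal obstacle, and the hard part of the proof, is the passage from this ``$\chi(\kappa)<\kappa$''-style bound to the sharper $\chi(\kappa)\le 1$. This requires selecting $b(\alpha)$ canonically as a function of the branch's level-$\alpha$ node (for instance, $b(\alpha):=\min\{\beta\mid C_\beta\cap\alpha=D\cap\alpha\}$, which depends only on $D\cap\alpha$), and then collapsing the set $A_\alpha$ to a single value in $V$. The collapsing relies on the coherent tree's specific structure---notably that $\Delta\cap\alpha$ turns out to lie in $T(\vec{C})_\alpha$ itself, rather than being a union of distinct level-$\alpha$ nodes---and may be supplemented by further applications of the hypothesis to auxiliary trees derived from $\vec{C}$. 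Once a single canonical $b^V(\alpha)$ is extracted, the pair $(\Delta,b^V)\in V$ witnesses $\chi(\kappa)\le 1$.
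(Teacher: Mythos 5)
Your overall strategy (code the $C$-sequence as a narrow tree, get a branch in a $\kappa$-cc extension, pull the trivialization back to $V$) is the right one and matches the paper's, but the descent step --- which you correctly identify as the crux --- is broken, and your proposed repair does not work. The set $\Delta:=\{\xi<\kappa\mid \Vert\xi\in\dot D\Vert_{\mathbb P}>0\}$ is a \emph{superset} of $D$, assembled from the values of $\dot D$ under incompatible conditions, so there is no reason for $\Delta\cap\alpha$ to be covered by a single $C_\beta$; your claimed structural fact that ``$\Delta\cap\alpha$ turns out to lie in $T(\vec C)_\alpha$ itself'' is false in general, since $\Delta\cap\alpha$ is a union of up to $<\kappa$ many distinct level-$\alpha$ nodes (one for each member of a maximal antichain deciding $\dot D\cap\alpha$), and a union of sets of the form $C_\beta\cap\alpha$ need not again be of that form. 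What is actually needed is the opposite containment: a ground-model cofinal set \emph{inside} $D$. The observation that unlocks this is that $D$ is \emph{closed} (each $D\cap\alpha$ equals some $C_\beta\cap\alpha$, which is closed in $\alpha$), so if $D$ is also unbounded it is a club, and the standard fact that every club of $\kappa$ in a $\kappa$-cc extension contains a ground-model club hands you $\Delta\subseteq D$ with $\Delta\in V$ a club; then $\Delta\cap\alpha\subseteq D\cap\alpha=C_\beta\cap\alpha\subseteq C_\beta$ gives $b(\alpha)=\{\beta\}$ outright, with no need to ``collapse'' any set $A_\alpha$. This is exactly the paper's argument, except that the paper uses the tree $T(\rho_0^{\vec C})$ of walks and quotes the forward implication of Todor\v{c}evi\'c's characterization to produce the club $C$ in the extension.

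A second, related gap is unboundedness of $D$. Your fix --- restricting limit levels to nodes $\chi_{C_\beta\cap\alpha}$ with $\sup(C_\beta\cap\alpha)=\alpha$ and then taking the downward closure --- does not achieve it: a node of the downward closure at level $\alpha$ is merely the restriction of a node that is ``locally cofinal'' at some $\gamma\geq\alpha$, and $C_\beta\cap\gamma$ being cofinal in $\gamma$ says nothing about $C_\beta\cap\alpha$ being cofinal in $\alpha$ (it may even be finite). So a $\kappa$-branch of your modified tree can still decode to a bounded $D$, at which point the whole argument collapses. This is precisely the technical point that the walks tree $T(\rho_0^{\vec C})$ is designed to handle: the proof of \cite[Theorem~6.3.5]{MR2355670} extracts from a cofinal branch a genuine \emph{club} $C$ with $C\cap\alpha=C_\beta\cap\alpha$ for all $\alpha$. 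If you want to avoid walks, you must build unboundedness into the coding (not merely into a level-by-level side condition), and then finish via the club-containment fact above.
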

\begin{proof} Let $\vec C=\langle C_\beta\mid\beta<\kappa\rangle$ be a $C$-sequence over $\kappa$.
Let $T$ be the corresponding tree $T(\rho_0^{\vec C})$ from the theory of walks on ordinals.
Since $\kappa$ is strongly inaccessible, $T$ is narrow, i.e., it is a $\kappa$-tree.
Let $V'$ be some $\kappa$-cc forcing extension of the universe in which $T$ admits a $\kappa$-branch.
By the proof of the forward implication of \cite[Theorem~6.3.5]{MR2355670}, there must exists a club $C\s\kappa$ such that,
for every $\alpha<\kappa$, there is a $\beta\ge\alpha$ with $C\cap\alpha=C_\beta\cap\alpha$.
Since $V'$ is a $\kappa$-cc forcing extension of $V$, we may find a subclub $D\s C$ residing in $V$.
It follows that for every $\alpha<\kappa$, there is a $\beta\ge\kappa$ with $D\cap\alpha\s C_\beta$.
\end{proof}
\begin{remark} An analogous statement for $\chi(\kappa)=0$ follows from \cite[Lemma~2.2]{MR3072773}.
\end{remark}

\section{An ascent path variant of Kunen's construction}\label{sec3}

Throughout this section, $\theta,\kappa$ stand for infinite regular cardinals and we assume that $\lambda^\theta<\kappa$ for all $\lambda<\kappa$.
In particular, $\theta^+<\kappa$.
We also fix a sequence $\mathbf X=\langle X_\xi\mid\xi<\zeta\rangle$ satisfying the following requirements:
\begin{itemize}
\item $\langle X_\xi\mid \xi<\zeta\rangle$ is a $\s$-decreasing sequence of nonempty subsets of $\theta$;
\item $|\theta\setminus X_0|=\theta$;
\item $\bigcap_{\xi<\zeta}X_\xi=\emptyset$.
\end{itemize}

We denote by $\mathcal F$ the filter generated by the elements of $\mathbf X$,
i.e., $\mathcal F=\{ Y\s\theta\mid\exists\xi<\zeta\,(X_\xi\s Y)\}$.
Note that $\mathcal F$ is proper and nonprincipal.
Also, let $\mathcal{I}$ denote the dual ideal of $\mathcal{F}$.

\begin{defn}
Define an equivalence relation $=^*$ on ${}^{<\kappa}H_\kappa$ by letting
$s=^*t$ iff $\dom(s)=\dom(t)$ and there exists an $\alpha<\dom(s)$ such that $s(\beta) = t(\beta)$ whenever $\alpha \leq \beta<\dom(s)$.
\end{defn}

We are now ready to present the notions of forcing we will be using.

\begin{defn}\label{def31}
$\mathbb S^\kappa_{\mathbf X}$
is defined to be the notion of forcing consisting of all pairs
$\langle T,\vec{f} \rangle$ that satisfy the following list of requirements:
\begin{enumerate}[label=(\arabic*)]
\item $T \subseteq {^{<\kappa}}\kappa$ is a normal uniformly homogeneous tree of a successor height, say $\eta+1$,
all of whose levels have size less than $\kappa$;
\item\label{ascentpathclause} $\vec{f}$ is a mutually exclusive $\mathcal{F}$-ascent path through $T$;
\item\label{vanishingclause} $V(T)$ is a closed set, and if $\eta$ is a limit ordinal, then $\eta \in V(T)$;
\item\label{keyclause} for every nonzero $\alpha\le\eta$, for every $t \in T_\alpha$, the set
$\{\tau<\theta \mid t \text{ and }\vec f(\alpha)(\tau)\text{ are mutually exclusive}\}$ is in $\mathcal{F}$.\footnote{This requirement ensures that the generic tree coincides with the one generated by its ascent path. It overcomes the issue with the forcing $\mathbb S^\kappa_{\mathcal F}$ discussed in the previous section.}
\end{enumerate}
The order on $\mathbb S^\kappa_{\mathbf X}$ is defined by taking end-extension on both coordinates.
\end{defn}

\begin{notation} For $\langle T,\vec{f} \rangle \in \mathbb S^\kappa_{\mathbf X}$:
\begin{itemize}
\item $\eta_T$ denotes the unique ordinal to satisfy $\h(T)=\eta_T+1$;
\item $\nu_T$ denotes the cardinality of the quotient $\faktor{T_{\eta_T}}{=^*}$.
\end{itemize}
\end{notation}
\begin{remark} If $\eta_T=\alpha+1$ is a successor ordinal, then $\nu_T=|\{t(\alpha)\mid t\in T_{\eta_T}\}|$.
\end{remark}

\begin{lemma}[One-step $\beta$-extension]\label{onestepextension2} Let $\langle T,\vec{f} \rangle \in \mathbb S^\kappa_{\mathbf X}$,
$\beta\le\eta_T$ and $\nu<\kappa$. Then there is a $\langle T',\vec{f'}\rangle\le \langle T,\vec{f}\rangle$ with
$\eta_{T'}=\eta_T+1$ and $\nu_{T'}\ge\nu$ such that:
\begin{itemize}
\item $\supp(\vec{f}(\beta),\vec{f}(\eta_T))\s \supp(\vec{f}(\eta_T),\vec{f'}(\eta_{T'}))$, and
\item $\supp(\vec{f}(\beta),\vec{f'}(\eta_{T'}))=\theta$.
\end{itemize}
\end{lemma}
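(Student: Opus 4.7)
Set $\eta := \eta_T$, and aim to build $T'$ of height $\eta+2$ by adding exactly one new top level. Choose an injection $\tau \mapsto \iota_\tau$ from $\theta$ into $\kappa \setminus \nu$, let $I := \{\iota_\tau \mid \tau<\theta\} \cup \nu$, and define
$$T' := T \cup \{ t{}^\smallfrown\langle \iota\rangle \mid t\in T_\eta,\ \iota\in I\}.$$
This is a downward-closed normal uniformly homogeneous tree of successor height $\eta+2$ whose top level has size $|T_\eta|\cdot|I|<\kappa$, and $\nu_{T'}\ge\nu$ since $\nu\s I$. To extend the ascent path, write $S := \supp(\vec f(\beta),\vec f(\eta))$, and set
$$\vec{f'}(\eta+1)(\tau) := \begin{cases}\vec f(\eta)(\tau){}^\smallfrown\langle \iota_\tau\rangle,&\text{if }\tau\in S;\\ (\vec f(\beta)(\tau)*\vec f(\eta)(\tau)){}^\smallfrown\langle \iota_\tau\rangle,&\text{if }\tau\notin S.\end{cases}$$
By uniform homogeneity of $T$, the stem $\vec f(\beta)(\tau)*\vec f(\eta)(\tau)$ lies in $T_\eta$, so the new values all live in $T'_{\eta+1}$.

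The two explicit conclusions of the lemma fall out immediately: in both cases $\vec{f'}(\eta+1)(\tau)\restriction\beta=\vec f(\beta)(\tau)$, yielding $\supp(\vec f(\beta),\vec{f'}(\eta+1))=\theta$; and for $\tau\in S$ one has $\vec{f'}(\eta+1)(\tau)\restriction\eta=\vec f(\eta)(\tau)$, giving $S\s \supp(\vec f(\eta),\vec{f'}(\eta+1))$. The ascent-path condition at the new top level reduces to checking that $\supp(\vec{f'}(\alpha),\vec{f'}(\eta+1))\in\mathcal F$ for every $\alpha\le\eta$, which a short case-split shows contains $\supp(\vec f(\alpha),\vec f(\beta))\cap\supp(\vec f(\alpha),\vec f(\eta))\in\mathcal F$. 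Mutual exclusivity of the family $\langle \vec{f'}(\eta+1)(\tau)\mid\tau<\theta\rangle$ is verified coordinate-by-coordinate: below $\beta$ the values differ by the mutual exclusivity of $\vec f(\beta)$, on $[\beta,\eta)$ by that of $\vec f(\eta)$, and at the top coordinate $\eta$ by the injectivity of $\tau\mapsto\iota_\tau$. Finally, $V(T')=V(T)$ since only a successor level was added, so clause~\ref{vanishingclause} is inherited.

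The main obstacle is clause~\ref{keyclause} at the new level. Fix $t=s{}^\smallfrown\langle\iota\rangle\in T'_{\eta+1}$, and define
\begin{align*}
A_\eta &:= \{\tau<\theta\mid s\text{ and }\vec f(\eta)(\tau)\text{ are mutually exclusive}\},\\
A_\beta &:= \{\tau<\theta\mid s\restriction\beta\text{ and }\vec f(\beta)(\tau)\text{ are mutually exclusive}\},\\
B &:= \{\tau<\theta\mid \iota_\tau\neq\iota\}.
\end{align*}
The sets $A_\eta,A_\beta$ belong to $\mathcal F$ by clause~\ref{keyclause} for $\langle T,\vec f\rangle$ at levels $\eta$ and $\beta$ respectively, while $B$ has at most one-element complement; the assumption $\bigcap_{\xi<\zeta}X_\xi=\emptyset$ then produces $\xi$ with $X_\xi\s B$, placing $B$ in $\mathcal F$. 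For $\tau\in A_\eta\cap A_\beta\cap B$, $t$ and $\vec{f'}(\eta+1)(\tau)$ disagree at every coordinate of $\eta+1$: the top coordinate is handled by $B$; on $[\beta,\eta)$ both cases reduce to comparing $s$ with $\vec f(\eta)(\tau)$, handled by $A_\eta$; below $\beta$, if $\tau\in S$ then $\vec f(\eta)(\tau)\restriction\beta=\vec f(\beta)(\tau)$ and $A_\eta$ still applies, while if $\tau\notin S$ then $\vec{f'}(\eta+1)(\tau)\restriction\beta=\vec f(\beta)(\tau)$ and $A_\beta$ applies. Hence $A_\eta\cap A_\beta\cap B\in\mathcal F$ witnesses clause~\ref{keyclause} for $t$, completing the construction.
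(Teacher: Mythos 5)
Your proposal is correct and follows essentially the same construction as the paper: extend the top level by appending fresh labels to every node of $T_{\eta_T}$ and define the new ascent-path value as $(\vec f(\beta)(\tau)*\vec f(\eta_T)(\tau)){}^\smallfrown\langle\cdot\rangle$ with an injective choice of top coordinates (your case split on $S$ is redundant, since for $\tau\in S$ one has $\vec f(\beta)(\tau)*\vec f(\eta_T)(\tau)=\vec f(\eta_T)(\tau)$ anyway). The paper leaves all verifications as ``clear''; your checks of the ascent-path, mutual-exclusivity and Clause~\ref{keyclause} requirements, in particular the use of $A_\eta\cap A_\beta\cap B\in\mathcal F$, are exactly the intended ones.
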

\begin{proof} Set $T':=T\cup\{ t{}^\smallfrown\langle\iota\rangle\mid t\in T_{\eta_T},~\iota<|\nu\cup\theta|\}$,
and then define $\vec{f'}:\eta_{T'}+1\rightarrow{}^\theta({}^{<\kappa}\kappa)$ by letting for all $\alpha\le\eta_{T'}$ and $\tau<\theta$:
$$\vec{f'}(\alpha)(\tau):=\begin{cases}
\vec{f}(\alpha)(\tau),&\text{if }\alpha\le\eta_{T};\\
(\vec{f}(\beta)(\tau)*\vec{f}(\eta_T)(\tau)){}^\smallfrown\langle \tau\rangle,&\text{otherwise}.
\end{cases}$$

It is clear that $\langle T',\vec{f'}\rangle$ is an element of $\mathbb S^\kappa_{\mathbf X}$ as sought.
\end{proof}

The next lemma identifies a feature of decreasing sequences of conditions sufficient to ensure the existence of a lower bound.

\begin{lemma}\label{minimalextension2} Suppose $\Gamma$ is a cofinal subset of some
$\gamma\in\acc(\kappa)$, $\delta\in(\gamma,\kappa)$,
and we are given a sequence $\langle (T^\beta,\vec{f^\beta}, \vec{z}^\beta) \mid \beta\in\Gamma\rangle$ such that
\begin{itemize}
\item $\langle \langle T^\beta,\vec{f^\beta}\rangle\mid \beta\in\Gamma\rangle$ is a
decreasing sequence of conditions in $\mathbb S^\kappa_{\mathbf X}$;
\item for every $\beta\in\Gamma$, $\vec{z}^\beta:(\beta,\delta)\rightarrow T^\beta$, where $\beta<i<\delta$ implies:
\begin{itemize}
\item $\vec{z}^\beta(i)$ belongs to the top level of $T^\beta$;
\item $\vec{z}^\beta(i)\not=^*\vec{z}^\beta(i')$ for every $i'\in(\beta,\delta)\setminus\{i\}$;
\item $\vec{z}^\beta(i)\not=^*\vec{f}^\beta(\eta_{T^\beta})(0)$;
\item $\vec{z}^\beta(i)$ and $\vec{f}^\beta(\eta_{T^\beta})(\tau)$ are mutually exclusive whenever $0<\tau<\theta$.\footnote{This clause together with the previous one provide better control on how Clause~\ref{keyclause} of Definition~\ref{def31} is implemented with respect to the nodes enumerated by $\vec z^\beta$.}

\end{itemize}
\item for all $\alpha,\beta\in\Gamma$:
\begin{itemize}
\item $\supp(\vec{f^\alpha}(\eta_{T^\alpha}),\vec{f^\beta}(\eta_{T^\beta}))=\theta$;
\item $\vec z^\alpha(i) \s \vec z^\beta(i)$ provided $\alpha<\beta<i<\delta$.
\end{itemize}
\end{itemize}

Then there are a condition $\langle T^\gamma,\vec{f^\gamma} \rangle$ in $\mathbb S^\kappa_{\mathbf X}$ and $\vec{z^\gamma}:(\gamma,\delta)\rightarrow T^\gamma$ such that:
\begin{itemize}
\item $\eta_{T^\gamma}=\sup_{\beta\in\Gamma}\eta_{T^\beta}$;
\item $\vec{z^\gamma}(i)=\bigcup_{\beta\in\Gamma}\vec{z^\beta}(i)$ whenever $\gamma<i<\delta$;
\item for every $\beta\in\Gamma$, $\langle T^\gamma,\vec{f^\gamma}\rangle\le \langle T^\beta,\vec{f^\beta}\rangle$ and $\supp(\vec{f^\beta}(\eta_{T^\beta}),\vec{f^\gamma}(\eta_{T^\gamma}))=\theta$;
\item a function $y:\eta_{T^\gamma}\rightarrow\kappa$ is in $T^\gamma$ iff one of the following holds:
\begin{itemize}
\item there are $x\in\bigcup_{\beta\in\Gamma}T^\beta$ and $\tau<\theta$ such that $y=x*\vec{f^\gamma}(\eta_{T^\gamma})(\tau)$, or
\item there are $x\in\bigcup_{\beta\in\Gamma}T^\beta$ and $i\in(\gamma,\delta)$ such that $y=x*\vec z^\gamma(i)$.
\end{itemize}
\end{itemize}
\end{lemma}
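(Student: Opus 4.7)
The plan is to populate $T^\gamma$ level-by-level. Set $\eta:=\sup_{\beta\in\Gamma}\eta_{T^\beta}$; assuming $\eta_{T^\beta}$ is unbounded in $\eta$ (else the construction trivially collapses to some $T^\beta$), $\eta$ is a limit ordinal. Take $T^\gamma\restriction\eta:=\bigcup_{\beta\in\Gamma}T^\beta$, and for each $\alpha<\eta$ set $\vec f^\gamma(\alpha):=\vec f^\beta(\alpha)$ for any $\beta\in\Gamma$ with $\eta_{T^\beta}>\alpha$ (well-defined by end-extension of conditions). At the new top, for each $\tau<\theta$ define
\[
  \vec f^\gamma(\eta)(\tau):=\bigcup_{\beta\in\Gamma}\vec f^\beta(\eta_{T^\beta})(\tau),
\]
which is a well-defined function $\eta\to\kappa$ because the hypothesis $\supp(\vec f^\alpha(\eta_{T^\alpha}),\vec f^\beta(\eta_{T^\beta}))=\theta$ combined with end-extension forces $\langle\vec f^\beta(\eta_{T^\beta})(\tau)\mid\beta\in\Gamma\rangle$ to be $\subseteq$-increasing for every fixed $\tau$; analogously, $\vec z^\gamma(i):=\bigcup_{\beta\in\Gamma}\vec z^\beta(i)$ is well-defined by the chain hypothesis $\vec z^\alpha(i)\subseteq\vec z^\beta(i)$ for $\alpha<\beta<i$. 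Stipulate $T^\gamma_\eta$ exactly as the statement prescribes, completing the construction.

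Routine verifications: $|T^\gamma_\eta|\le|T^\gamma\restriction\eta|\cdot(\theta+|\delta|)<\kappa$ by the standing assumption $\lambda^\theta<\kappa$; normality via $x*\vec f^\gamma(\eta)(0)$ extending each $x$; uniform homogeneity at level $\eta$ is immediate because $\dom(s)=\dom(t)=\eta$ forces $s*t=s\in T^\gamma_\eta$; the support equality $\supp(\vec f^\beta(\eta_{T^\beta}),\vec f^\gamma(\eta))=\theta$ follows from coordinatewise end-extension; and the mutual exclusivity of $\langle\vec f^\gamma(\eta)(\tau)\mid\tau<\theta\rangle$ is inherited coordinatewise from the mutual exclusivity at each level $\eta_{T^\beta}$. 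The $\mathcal F$-ascent condition at the new top is verified by choosing, for $\alpha<\eta$, some $\beta\in\Gamma$ with $\alpha\le\eta_{T^\beta}$, whence $\supp(\vec f^\gamma(\alpha),\vec f^\gamma(\eta))\supseteq\supp(\vec f^\beta(\alpha),\vec f^\beta(\eta_{T^\beta}))\in\mathcal F$.

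The main obstacle is Clause~\ref{keyclause} at the new top: for each $t\in T^\gamma_\eta$, the set $A_t:=\{\tau<\theta\mid t\text{ and }\vec f^\gamma(\eta)(\tau)\text{ are mutually exclusive}\}$ must lie in $\mathcal F$. For $t=x*\vec f^\gamma(\eta)(\tau_0)$: above $\dom(x)$, the requirement reduces to the mutual exclusivity of $\vec f^\gamma(\eta)(\tau_0)$ with $\vec f^\gamma(\eta)(\tau)$ for $\tau\ne\tau_0$, while below $\dom(x)$, picking $\beta\in\Gamma$ with $x\in T^\beta$ and invoking Clause~\ref{keyclause} for $\langle T^\beta,\vec f^\beta\rangle$ transfers mutual exclusivity from $\vec f^\beta(\eta_{T^\beta})(\tau)$ to its end-extension $\vec f^\gamma(\eta)(\tau)$. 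The upshot is that $A_t$ contains some member of $\mathcal F$ minus $\{\tau_0\}$; this remains in $\mathcal F$ because the hypotheses on $\mathbf X$ ($\subseteq$-decreasing with $\bigcap_{\xi<\zeta}X_\xi=\emptyset$) yield some $\xi$ with $\tau_0\notin X_\xi$, and any $X_{\xi'}\subseteq A$ with $\xi'\ge\xi$ then satisfies $X_{\xi'}\subseteq A\setminus\{\tau_0\}$. The case $t=x*\vec z^\gamma(i)$ is parallel, now invoking the hypothesis that $\vec z^\beta(i)$ and $\vec f^\beta(\eta_{T^\beta})(\tau)$ are mutually exclusive for $0<\tau<\theta$, so the exceptional index is $0$ rather than $\tau_0$.

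Finally, for Clause~\ref{vanishingclause}: one checks that $V(T^\gamma)\cap\eta=\bigcup_{\beta\in\Gamma}V(T^\beta)$ using the coincidence $T^\gamma\restriction(\alpha+1)=T^\beta\restriction(\alpha+1)$ for $\alpha\le\eta_{T^\beta}$, and this union is closed by the closedness of each $V(T^\beta)$ together with a straightforward tail argument. For $\eta\in V(T^\gamma)$, by uniform homogeneity it suffices to exhibit a single vanishing $\eta$-branch, obtained by choosing along $\Gamma$ a coherent chain of top-level nodes $w^\beta\in T^\beta_{\eta_{T^\beta}}$ in $=^*$-classes distinct from those of every structural node, so that $\bigcup_\beta w^\beta$ fails to be $=^*$-equivalent to any $\vec f^\gamma(\eta)(\tau)$ or $\vec z^\gamma(i)$ and hence lies outside $T^\gamma_\eta$ by the explicit description of the top level. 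The remaining assertions in the conclusion are then immediate.
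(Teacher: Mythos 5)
Your construction of $\vec{f^\gamma}$, $\vec{z^\gamma}$ and $T^\gamma$ coincides with the paper's, and your verifications of the size bound, normality, homogeneity, the ascent-path clause, and Clause~\ref{keyclause} of Definition~\ref{def31} (intersect a filter set supplied by Clause~\ref{keyclause} in some $T^\beta$ with the mutual-exclusivity set at the new top level, then discard the one exceptional index using that the $X_\xi$ are decreasing with empty intersection) match the paper's ``intersection of two filter sets'' argument.

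The one genuine gap is your justification that $\eta:=\eta_{T^\gamma}$ lies in $V(T^\gamma)$. You propose to ``choose along $\Gamma$ a coherent chain of top-level nodes $w^\beta\in T^\beta_{\eta_{T^\beta}}$ in $=^*$-classes distinct from those of every structural node.'' At a limit stage such a chain cannot simply be chosen: coherence ($w^\alpha\s w^\beta$ for $\alpha<\beta$) is a commitment that must be maintained throughout the recursion, and nothing guarantees that the inverse system of top levels admits a thread avoiding the prescribed classes --- this is precisely why the lemma carries the auxiliary sequences $\vec z^\beta$ in its hypotheses. Worse, your requirement that $w^\beta$ avoid the $=^*$-class of \emph{every} structural node of $T^\beta$ would exclude the only coherent chain the hypotheses actually hand you. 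The intended witness is $w^\beta:=\vec z^\beta(\gamma)$: note that $\gamma\in(\beta,\delta)=\dom(\vec z^\beta)$ for every $\beta\in\Gamma$, whereas $\gamma\notin(\gamma,\delta)=\dom(\vec z^\gamma)$, so this index is deliberately consumed at the limit and $y:=\bigcup_{\beta\in\Gamma}\vec z^\beta(\gamma)$ is not declared into $T^\gamma_\eta$. One must then check that $y$ is genuinely absent from the top level as described: if $y=x*\vec{f^\gamma}(\eta)(\tau)$ or $y=x*\vec z^\gamma(i)$ with $i\in(\gamma,\delta)$, then for all sufficiently large $\beta\in\Gamma$ one gets $\vec z^\beta(\gamma)=^*\vec f^\beta(\eta_{T^\beta})(\tau)$ or $\vec z^\beta(\gamma)=^*\vec z^\beta(i)$, contradicting (respectively) the third bullet for $\tau=0$, mutual exclusivity for $\tau>0$, or the second bullet of the hypothesis on $\vec z^\beta$, since $i\neq\gamma$. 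With $w^\beta=\vec z^\beta(\gamma)$ substituted and this verification added, your argument goes through and agrees with the paper's.
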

\begin{proof}
Set $\eta:=\sup_{\beta\in\Gamma}\eta_{T^\beta}$ and $\vec{f}:=\bigcup_{\beta\in\Gamma}\vec{f}^{\beta}$.
Define $\vec{f^\gamma}:\eta+1\rightarrow{}^\theta({}^{<\kappa}\kappa)$ by letting for all $\alpha\le\eta$ and $\tau<\theta$:
$$\vec{f^\gamma}(\alpha)(\tau):=\begin{cases}
\vec{f}(\alpha)(\tau),&\text{if }\alpha<\eta;\\
\bigcup_{\beta\in\Gamma}\vec{f}(\eta_{T^\beta})(\tau),&\text{if }\alpha=\eta.
\end{cases}$$
Define $\vec{z^\gamma}:(\gamma,\delta)\rightarrow T^\gamma$ via
$$\vec{z^\gamma}(i):=\bigcup_{\beta\in\Gamma}\vec{z^\beta}(i).$$
Finally, consider $T:=\bigcup_{\beta\in\Gamma}T^\beta$, and then let
$$T^\gamma:=T \cup \bigcup \{ x*\vec{f^\gamma}(\eta)(\tau), x*\vec{z^\gamma}(i)\mid x \in T, \tau<\theta, \gamma<i<\delta \}.$$

To see that $\langle T^\gamma,\vec{f}^{\gamma}\rangle$ is as sought
note that
$$y:=\bigcup_{\beta\in\Gamma}\vec{z^\beta}(\gamma)$$
determines a vanishing $\eta$-branch of $T^\gamma$
and hence $\eta\in V^-(T)=V(T)$.
Also, for every $i \in (\gamma,\delta)$,
$\vec{z^\gamma}(i)$ and $\vec{f}^\gamma(\eta)(\tau)$ are mutually exclusive for every nonzero $\tau<\theta$.
So, for every $t\in (T^\gamma)_\eta$,
an intersection of two filter sets demonstrates that
$\{\tau<\theta \mid t \text{ and }\vec f^\gamma(\eta)(\tau)\text{ are mutually exclusive}\}$ is in $\mathcal{F}$.
\end{proof}

\begin{lemma}\label{cor282}
$\mathbb S^\kappa_{\mathbf X}$ is ${<}\kappa$-strategically closed.
\end{lemma}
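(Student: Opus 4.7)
The plan is to describe, for each ordinal $\gamma<\kappa$, a winning strategy $\sigma_\gamma$ for Player~II in the standard length-$\gamma$ closure game on $\mathbb S^\kappa_{\mathbf X}$, where Player~II moves both after each move of Player~I and at every limit stage. Fix such a $\gamma$ and set $\delta:=\gamma$. Alongside her condition $\langle T^\beta,\vec{f}^\beta\rangle$ at each of her stages $\beta<\gamma$, Player~II will maintain an auxiliary function $\vec{z}^\beta\colon(\beta,\delta)\rightarrow T^\beta$ of top-level nodes subject to the four constraints spelled out in the hypotheses of Lemma~\ref{minimalextension2}, and the strategy will enforce the invariants $\supp(\vec{f}^\alpha(\eta_{T^\alpha}),\vec{f}^\beta(\eta_{T^\beta}))=\theta$ and $\vec{z}^\alpha(i)\subseteq\vec{z}^\beta(i)$ for all earlier Player~II stages $\alpha<\beta$ and all $i\in(\beta,\delta)$. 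The whole point of this bookkeeping is to make the limit step a direct invocation of Lemma~\ref{minimalextension2}.

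At a successor stage, let $\langle T_*,\vec{f}_*\rangle$ be Player~I's most recent move, which extends Player~II's previous move $\langle T^{\beta'},\vec{f}^{\beta'}\rangle$. Player~II applies Lemma~\ref{onestepextension2} to $\langle T_*,\vec{f}_*\rangle$ with the parameter set to $\eta_{T^{\beta'}}$ and $\nu$ large enough to accommodate $|\delta|$-many new reserved nodes. The first bullet of that lemma yields $\supp(\vec{f}^{\beta'}(\eta_{T^{\beta'}}),\vec{f}^\beta(\eta_{T^\beta}))=\theta$, which, combined with the inductive invariant, propagates full support up to $\vec{f}^\beta(\eta_{T^\beta})$ against every prior Player~II condition by transitivity of end-extension. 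Next, Player~II extends each previously reserved node $\vec{z}^\alpha(i)$ with $\beta<i<\delta$ to the new top level using normality, and then picks fresh top-level nodes for each $i\in(\beta,\delta)$ not yet reserved. The existence of such fresh nodes meeting the pairwise $\neq^*$ and mutual-exclusion clauses is guaranteed by Clause~\ref{keyclause} of Definition~\ref{def31} together with uniform homogeneity, using that $\theta\setminus X_0$ has size $\theta$ and therefore supplies abundantly many coordinates witnessing mutual exclusion against $\vec{f}^\beta(\eta_{T^\beta})$.

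At a limit stage $\gamma_0<\gamma$ where Player~II must move, take $\Gamma$ to be the set of Player~II's previous stages, which is cofinal in $\gamma_0$. The invariants maintained at successor stages are exactly the hypotheses of Lemma~\ref{minimalextension2} applied to $\langle(T^\alpha,\vec{f}^\alpha,\vec{z}^\alpha)\mid\alpha\in\Gamma\rangle$ with this $\gamma_0$ and $\delta$; its output $\langle T^{\gamma_0},\vec{f}^{\gamma_0}\rangle$ together with the associated $\vec{z}^{\gamma_0}$ is Player~II's move, and the conclusion of that lemma preserves the invariants $\supp(\vec{f}^\alpha(\eta_{T^\alpha}),\vec{f}^{\gamma_0}(\eta_{T^{\gamma_0}}))=\theta$ and $\vec{z}^\alpha(i)\subseteq\vec{z}^{\gamma_0}(i)$.

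The main obstacle is the successor stage, where one must simultaneously (i)~extend below Player~I's last move, (ii)~maintain full support against every prior Player~II move, and (iii)~design the reserved nodes so as to meet the mutual-exclusion requirements against the next instance of the ascent path. Task~(ii) is handled by choosing the parameter of Lemma~\ref{onestepextension2} to be the previous Player~II height and relying on transitivity of end-extension; tasks~(i) and~(iii) rest on uniform homogeneity and the generous size of $\theta\setminus X_0$. The limit step, by contrast, is an immediate application of Lemma~\ref{minimalextension2}.
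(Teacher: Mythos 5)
Your overall architecture is the paper's own: Lemma~\ref{cor282} is proved there by pointing to the proof of Lemma~\ref{Strclosure2}, which is exactly the strategy you describe --- maintain reserved top-level nodes $\vec z^\beta$ together with a full-support invariant between Player~II's moves so that limit stages reduce to Lemma~\ref{minimalextension2}, and handle successor stages by a one-step extension whose parameter is the height of Player~II's \emph{previous} move. The transitivity argument for propagating $\supp=\theta$ is also sound, since along end-extensions ``comparable'' at a lower level upgrades to ``comparable'' at a higher one. The gap is in your successor step, in the claim that the reserved nodes can be carried forward with the properties that Lemma~\ref{minimalextension2} demands. You need each $\vec z^\beta(i)$ to be mutually exclusive with $\vec f^\beta(\eta_{T^\beta})(\tau)$ for \emph{every} nonzero $\tau<\theta$, i.e., to disagree with it at \emph{every} coordinate below $\eta_{T^\beta}$. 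Clause~\ref{keyclause} of Definition~\ref{def31} cannot deliver this: it only yields mutual exclusivity for an $\mathcal F$-large set of $\tau$'s. The hypothesis $|\theta\setminus X_0|=\theta$ is likewise irrelevant here (it is consumed in the chain-condition argument, for the injection $\psi$ in Claim~\ref{Sealinclaim2}). And ``extend each $\vec z^{\beta'}(i)$ to the new top level using normality'' is not enough: an arbitrary normal extension may well agree with some $\vec f^\beta(\eta_{T^\beta})(\tau)$ at a coordinate on the stretch between the old and the new heights.

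The correct mechanism, which your invariants already make available, is the explicit shape of the one-step extension. Writing $\eta$ for the top level of Player~I's last move, the new top level is $\{t{}^\smallfrown\langle\iota\rangle\mid t\in T_\eta,\ \iota<|\nu\cup\theta|\}$, so $\eta_{T^\beta}=\eta+1$ and the last coordinate is freely choosable. Set $\vec z^\beta(i)\restriction\eta:=\vec z^{\beta'}(i)*(\vec f^\beta(\eta_{T^\beta})(0)\restriction\eta)$ --- a node of $T_\eta$ by uniform homogeneity --- and choose the last coordinates so that $\langle \vec z^\beta(i)(\eta)\mid i\rangle{}^\smallfrown\langle \vec f^\beta(\eta_{T^\beta})(\tau)(\eta)\mid \tau<\theta\rangle$ is injective (this is why $\nu$ must be at least $|\delta|+\theta$). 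Then, for nonzero $\tau$, disagreement with $\vec f^\beta(\eta_{T^\beta})(\tau)$ holds on $[\eta_{T^{\beta'}},\eta)$ because $\vec z^\beta(i)$ copies $\vec f^\beta(\eta_{T^\beta})(0)$ there and the ascent-path nodes at a fixed level are mutually exclusive; below $\eta_{T^{\beta'}}$ because $\vec z^{\beta'}(i)$ was mutually exclusive with $\vec f^{\beta'}(\eta_{T^{\beta'}})(\tau)$, which by your full-support invariant is an initial segment of $\vec f^\beta(\eta_{T^\beta})(\tau)$; and at $\eta$ by injectivity. The same injectivity gives the pairwise $\not=^*$ clauses and $\vec z^\beta(i)\not=^*\vec f^\beta(\eta_{T^\beta})(0)$. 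With this repair your proof is complete and coincides with the paper's.
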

\begin{proof} By Lemma~\ref{minimalextension2}. See the proof of the upcoming Lemma~\ref{Strclosure2} for a stronger result.
\end{proof}

Given an $\mathbb S^\kappa_{\mathbf X}$-generic filter $G$,
we let
$T(G):=\bigcup\{T\mid \langle T,\vec{f}\rangle \in G\}$, and
$\vec{f^G}:=\bigcup\{\vec{f}\mid \langle T,\vec{f}\rangle \in G\}$.

\begin{defn}\label{defA2}
Suppose $G$ is an $\mathbb S^\kappa_{\mathbf X}$-generic filter over $V$.
In $V[G]$, for every $\xi<\zeta$, define the forcing notion $\mathbb A^\kappa_{\mathbf X,\xi}$ associated with $G$ to have underlying set $\kappa$,
and ordering
$$\beta\leq_{\mathbb A^\kappa_{\mathbf X,\xi}}\alpha \text{ iff }(\alpha\le\beta\ \&\ X_\xi\s \supp(\vec{f^G}(\alpha),\vec{f^G}(\beta))).$$

Let $\mathbb A^\kappa_{\mathbf X}$ be the lottery sum of these $\mathbb A^\kappa_{\mathbf X,\xi}$'s over all $\xi<\zeta$.
\end{defn}

\begin{lemma}\label{Strclosure2}
$\mathbb S^\kappa_{\mathbf X}* \mathbb{\dot{A}}^\kappa_{\mathbf X}$ has a dense subset that is ${<}\kappa$-strategically closed.
\end{lemma}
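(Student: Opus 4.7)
The plan is to let $D$ consist of all conditions $\langle\langle T,\vec f\rangle,\langle\check\xi,\check\alpha\rangle\rangle$ of $\mathbb S^\kappa_{\mathbf X}*\dot{\mathbb A}^\kappa_{\mathbf X}$ satisfying $\xi<\zeta$, $\alpha<\kappa$, and $\alpha=\eta_T$. Density is immediate: given any condition, first decide the $\mathbb A$-coordinate as a concrete pair $\langle\check\xi,\check\alpha\rangle$, then extend the tree via Lemma~\ref{onestepextension2} until its top level reaches that ordinal.

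To establish ${<}\kappa$-strategic closure of $D$, I would fix $\gamma<\kappa$, pick $\delta\in(\gamma,\kappa)$, and specify a strategy for player $\mathbf{II}$ in the length-$\gamma$ game on $D$ that maintains, alongside each $\mathbf{II}$-move $p_\beta\in D$, an auxiliary witness sequence $\vec z^\beta:(\beta,\delta)\to T^\beta$ meeting the hypotheses of Lemma~\ref{minimalextension2} with $\Gamma$ being the set of prior $\mathbf{II}$-stages. Limit stages are then essentially free: at a limit $\bar\gamma$, player $\mathbf{II}$ invokes that lemma to obtain both the lower-bound condition and the fresh witness sequence $\vec z^{\bar\gamma}$, announces $\alpha^{\bar\gamma}:=\eta_{T^{\bar\gamma}}$, and uses the full-support conclusion $\supp(\vec f^\beta(\eta_{T^\beta}),\vec f^{\bar\gamma}(\eta_{T^{\bar\gamma}}))=\theta$ to verify the $\mathbb A$-ordering.

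The delicate step is the successor move. Given player $\mathbf I$'s extension $\langle T_I,\vec f_I\rangle$ of the prior $\mathbf{II}$-condition $\langle T^\beta,\vec f^\beta\rangle$, with top levels $\eta_I$ and $\eta^\beta$, I would apply Lemma~\ref{onestepextension2} with parameter $\beta'=\eta^\beta$ and sufficiently large $\nu$, producing
\[
\vec f^{\beta+1}(\eta^{\beta+1})(\tau)=\bigl(\vec f^\beta(\eta^\beta)(\tau)*\vec f_I(\eta_I)(\tau)\bigr){}^\smallfrown\langle\tau\rangle,
\]
which crucially satisfies $\vec f^{\beta+1}(\eta^{\beta+1})(\tau)\restriction\eta^\beta=\vec f^\beta(\eta^\beta)(\tau)$ for \emph{every} $\tau<\theta$. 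Then for each $i\in(\beta+1,\delta)$ I would set
\[
\vec z^{\beta+1}(i):=\bigl(\vec z^\beta(i)*\vec f_I(\eta_I)(0)\bigr){}^\smallfrown\langle\iota_i\rangle,
\]
with the $\iota_i$ pairwise distinct in $[\theta,|\nu\cup\theta|)$. Uniform homogeneity of $T_I$ places the base in $T_{I,\eta_I}$, so $\vec z^{\beta+1}(i)$ sits on the new top level. Splitting the common domain into $[0,\eta^\beta)$, $[\eta^\beta,\eta_I)$, and $\{\eta_I\}$, the desired mutual exclusivity of $\vec z^{\beta+1}(i)$ with $\vec f^{\beta+1}(\eta^{\beta+1})(\tau)$ for $\tau\in(0,\theta)$ reduces respectively to the inductive witness invariant combined with the above restriction identity, to the mutual exclusivity of ascent-path nodes (since the witness equals $\vec f_I(\eta_I)(0)$ on that interval), and to the choice $\iota_i\geq\theta$; the $\neq^*$ clauses follow from the $\iota_i$ being pairwise distinct and nonzero.

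The hard part will be ensuring that the mutual exclusivity clause of Lemma~\ref{minimalextension2}---which is strictly stronger than what Clause~\ref{keyclause} of Definition~\ref{def31} automatically provides (an $\mathcal F$-sized set of good $\tau$'s, not all of $(0,\theta)$)---can be maintained across stages. This is what makes the specific choice $\beta'=\eta^\beta$ in Lemma~\ref{onestepextension2} essential: it forces the full-restriction identity for every $\tau<\theta$, allowing the witness invariant to propagate verbatim from $\vec z^\beta$ to $\vec z^{\beta+1}$ without loss on the inherited interval $[0,\eta^\beta)$.
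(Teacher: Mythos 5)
Your proposal is correct and follows essentially the same route as the paper: the same dense set (conditions whose $\mathbb A$-coordinate is decided as the current top level), the same auxiliary sequences $\vec z^\beta$ of pairwise-$\neq^*$, mutually exclusive top-level witnesses fed into Lemma~\ref{minimalextension2} at limits, and the same key successor move of taking the one-step extension over the top level of player $\pII$'s \emph{previous} condition so that full support and the witness invariants propagate. The only differences from the paper's write-up are cosmetic bookkeeping (the paper indexes $\pII$'s moves by even ordinals and spells out the initial move $\gamma=2$ separately, whereas you leave the base case implicit).
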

\begin{proof} Let $\xi<\zeta$, and we shall prove that $\mathbb S^\kappa_{\mathbf X}* \mathbb{\dot{A}}^\kappa_{\mathbf X,\xi}$ has a dense subset that is ${<}\kappa$-strategically closed.

Let $D$ be the collection of all pairs $(p,\dot q)$ in $\mathbb S^\kappa_{\mathbf X}*\dot{\mathbb A}^\kappa_{\mathbf X,\xi}$
such that if $p=\langle T,\vec{f} \rangle$, then $\dot q=\check{\eta_T}$.
By Lemma~\ref{onestepextension2}, $D$ is dense in $\mathbb S^\kappa_{\mathbf X}*\dot{\mathbb A}^\kappa_{\mathbf X,\xi}$.
For simplicity, we shall identify $D$ with the collection of all triples $(T,\vec{f},\eta)$ such that $\langle T,\vec{f}\rangle\in \mathbb S^\kappa_{\mathbf X}$ and $\eta=\eta_T$,
where a triple $(T',\vec{f'} ,\eta')$ extends $( T,\vec{f} ,\eta)$ iff all of the following hold:
\begin{itemize}
\item $T'\cap{}^{\le\eta}\kappa=T$,
\item $\vec{f'}\restriction(\eta+1)=\vec f$, and
\item $X_\xi\s \supp(\vec f(\eta),\vec{f'}(\eta'))$.
\end{itemize}

To prove that $D$ is ${<}\kappa$-strategically closed,
let $\mu<\kappa$ be arbitrary,
and we shall describe a winning strategy for $\pII$ in the game $\Game_\mu(D)$.
Playing the game will yield a decreasing sequence of conditions in $D$, $\langle (T^{\beta},\vec{f}^{\beta},\eta_{\beta})\mid \beta<\mu \rangle$,
along with an auxiliary sequence $\langle \vec{z}^{\beta}\mid \beta<\mu\text{ nonzero even ordinal}\rangle$.

We start by letting $(T^0,\vec{f^0},\eta_0)$ be the maximal element of $D$, that is $T^0:=\{\emptyset\}$, $\vec{f^0}:1\rightarrow{}^\theta\{\emptyset\}$ and $\eta_0:=0$.
Next, suppose that $\gamma<\mu$ is a nonzero even ordinal and we have already obtained a decreasing sequence of conditions $\langle (T^{\beta},\vec{f}^{\beta},\eta_{\beta})\mid \beta<\gamma \rangle$ in $D$,
and a sequence $\langle \vec{z}^{\beta}\mid \beta<\gamma\text{ nonzero even ordinal}\rangle$
in such a way that all of the following hold:
\begin{enumerate}[label=(\roman*)]
\item\label{basic22} for all $\alpha<\beta<\gamma$, $X_\xi\s\supp(\vec f^{\alpha}(\eta_\alpha), \vec f^{\beta}(\eta_\beta))$;
\item\label{basic02} for every nonzero even $\beta<\gamma$, $\vec{z}^\beta:(\beta,\mu)\rightarrow T^\beta$, where $\beta<i<\mu$ implies:
\begin{itemize}
\item $\vec{z}^\beta(i)$ belongs to the top level of $T^\beta$;
\item $\vec{z}^\beta(i)\not=^*\vec{z}^\beta(i')$ for every $i'\in(\beta,\mu)\setminus\{i\}$;
\item $\vec{z}^\beta(i)\not=^*\vec{f}^\beta(\eta_\beta)(0)$;
\item $\vec{z}^\beta(i)$ and $\vec{f}^\beta(\eta_\beta)(\tau)$ are mutually exclusive for every nonzero $\tau<\theta$;
\end{itemize}
\item\label{basic12} for every pair $\alpha<\beta$ of even ordinals below $\gamma$:
\begin{itemize}
\item $\supp(\vec f^{\alpha}(\eta_\alpha), \vec f^{\beta}(\eta_\beta))=\theta$;
\item $\vec z^\alpha(i) \s \vec z^\beta(i)$ provided $0<\alpha<\beta<i<\mu$.
\end{itemize}
\end{enumerate}

Our description of the $\gamma^{\text{th}}$-move is divided into three cases, as follows.

$\br$ If $\gamma=2$, then using Lemma~\ref{onestepextension2},
fix a one-step $\eta_1$-extension $\langle T^2,\vec{f}^2\rangle$ of $\langle T^1,\vec f^1\rangle$
such that $\nu_{T^2}\ge\max\{\mu,\theta^+\}$.
Letting $\eta_2:=\eta_{T^2}$,
it is clear $(T^2,\vec f^2,\eta_2)$ belongs to $D$ and extends $(T^1,\vec f^1,\eta_1)$.
Now, as $\nu_{T^2}\ge\max\{\mu,\theta^+\}$,
we may fix a map $\vec{z^2}:(2,\mu)\rightarrow T^2$, where $2<i<\mu$ implies:
\begin{itemize}
\item $\vec{z^2}(i)$ belongs to the top level of $T^2$;
\item $\vec{z^2}(i)\not=^*\vec{z^2}(i')$ for every $i'\in(2,\mu)\setminus\{i\}$;
\item $\vec{z^2}(i)\not=^*\vec{f}^2(\eta_{2})(0)$;
\item $\vec{z^2}(i)$ and $\vec{f}^2(\eta_{2})(\tau)$ are mutually exclusive for every nonzero $\tau<\theta$.
\end{itemize}
Indeed, As $\eta_2=\eta_1+1$, this can be achieved by ensuring that $\langle \vec{z^2}(i)(\eta_1)\mid 2<i<\mu\rangle{}^\smallfrown\langle \vec{f^2}(\tau)(\eta_1)\mid \tau<\theta\rangle$
be an injective sequence, whereas $\langle \vec{z^2}(i)\restriction\eta_1\mid 2<i<\mu\rangle$
be a constant sequence whose sole element is $\vec{f^2}(\eta_2)(0)\restriction\eta_1$.

$\br$ If $\gamma$ is a successor ordinal, say, $\gamma=\alpha+2$,
then using Lemma~\ref{onestepextension2},
fix a one-step $\eta_\alpha$-extension $\langle T^\gamma,\vec{f}^\gamma\rangle$ of $\langle T^{\alpha+1},\vec f^{\alpha+1}\rangle$
such that $\nu_{T^\gamma}\ge\max\{\mu,\theta^+\}$.
Letting $\eta_\gamma:=\eta_{T^\gamma}$,
it is clear $(T^\gamma,\vec f^\gamma,\eta_\gamma)$ belongs to $D$,
and
$$X_\xi\s \supp(\vec f^{\alpha}(\eta_{\alpha}),\vec{f}^{\alpha+1}(\eta_{\alpha+1}))\s \supp(\vec f^{\alpha+1}(\eta_{\alpha+1}),\vec{f}^\gamma(\eta_\gamma)).$$

Next, we fix a map $\vec{z}^\gamma:(\gamma,\mu)\rightarrow T^\gamma$, where $\gamma<i<\mu$ implies:
\begin{itemize}
\item $\vec{z}^\gamma(i)$ belongs to the top level of $T^\gamma$;
\item $\vec{z}^\gamma(i)\not=^*\vec{z}^\gamma(i')$ for every $i'\in(\gamma,\mu)\setminus\{i\}$;
\item $\vec z^\alpha(i)\s \vec{z}^\gamma(i)$ and $\vec{z}^\gamma(i)\not=^*\vec{f}^\gamma(\eta_\gamma)(0)$;
\item $\vec{z}^\gamma(i)$ and $\vec{f}^\gamma(\eta_\gamma)(\tau)$ are mutually exclusive for every nonzero $\tau<\theta$.
\end{itemize}

This is possible because the combination of $\nu_{T^\gamma}\ge\max\{\mu,\theta^+\}$
with $\eta_\gamma=\eta_{\alpha+1}+1$ and $\supp(\vec f^{\alpha}(\eta_\alpha), \vec f^{\gamma}(\eta_\gamma))=\theta$ enables us to ensure
that $\langle \vec{z^\gamma}(i)(\eta_{\alpha+1})\mid \gamma<i<\mu\rangle{}^\smallfrown\langle \vec{f^\gamma}(\tau)(\eta_{\alpha+1})\mid \tau<\theta\rangle$
be an injective sequence, and $\langle \vec{z^\gamma}(i)\restriction\eta_{\alpha+1}\mid \gamma<i<\mu\rangle$
be equal to $\langle \vec z^\alpha(i)*(\vec{f^\gamma}(\eta_\gamma)(0)\restriction\eta_{\alpha+1})\mid \gamma<i<\mu\rangle$.

$\br$ If $\gamma$ is a limit ordinal, then obtain $\langle T^\gamma, \vec{f}^\gamma \rangle$ and $\vec z^\gamma$ by appealing to Lemma~\ref{minimalextension2}
with the sequence
$\langle (T^\beta,\vec{f^\beta}, \vec{z}^\beta) \mid \beta<\gamma\text{ nonzero even ordinal}\rangle$.
Then let $\eta_\gamma:=\eta_{T^\gamma}$.

In all cases, $( T^{\gamma},\vec{f}^{\gamma},\eta_\gamma)$ belongs to $D$ and requirements \ref{basic22}--\ref{basic12} are satisfied for $\gamma$
by the induction and the construction of $( T^{\gamma},\vec{f}^{\gamma},\eta_\gamma)$.
This completes the description of our winning strategy for $\pII$.
\end{proof}

\begin{lemma}\label{chainconditionlemma2}
For every $\xi<\zeta$,
$\mathbb S^\kappa_{\mathbf X} \Vdash \mathbb{\dot{A}}^\kappa_{\mathbf X,\xi}$ has the $\kappa$-cc.

In particular, $\mathbb S^\kappa_{\mathbf X}\Vdash \mathbb{\dot{A}}^\kappa_{\mathbf X}$ has the $\kappa$-cc.
\end{lemma}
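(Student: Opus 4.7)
Since $\zeta \le 2^\theta < \kappa$, the $\kappa$-cc of the lottery sum $\mathbb{A}^\kappa_{\mathbf{X}}$ follows from the $\kappa$-cc of each component $\mathbb{A}^\kappa_{\mathbf{X},\xi}$. I focus on the first assertion, fixing $\xi<\zeta$, and argue by contradiction. Suppose some $p_0 \in \mathbb{S}^\kappa_{\mathbf{X}}$ forces $\langle \dot\alpha_i \mid i<\kappa\rangle$ to be an injective enumeration of an antichain in $\dot{\mathbb{A}}^\kappa_{\mathbf{X},\xi}$. The plan is to produce, via an elementary submodel argument, a single condition $r \le p_0$ that forces two of the $\dot\alpha_i$'s to be compatible --- directly contradicting the antichain assumption.

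Let $M \prec H_{\kappa^+}$ be an elementary submodel of size $<\kappa$ containing $p_0$, $\mathbf{X}$, $\xi$, and the name for the antichain, and set $\delta := M \cap \kappa$, arranged to lie in $\acc(\kappa)$. I will build $r \le p_0$ of height $\delta+1$ satisfying both (i) $r$ decides $\dot\alpha_j$ for every $j \in M\cap\kappa$ to some value $\alpha_j<\delta$, and (ii) for a distinguished early-decided index $i_0\in M\cap\kappa$ with value $\alpha_{i_0}$, the ascent path $\vec{f^r}$ has $\supp(\vec{f^r}(\alpha_{i_0}), \vec{f^r}(\gamma)) = \theta$ for every $\gamma \in [\alpha_{i_0},\delta]$. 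The construction proceeds by playing the strategic-closure game underlying Lemma~\ref{Strclosure2}, specialized so that every successor move is an invocation of Lemma~\ref{onestepextension2} with $\beta := \alpha_{i_0}$; a short induction using the second clause of Lemma~\ref{onestepextension2} shows that (ii) is preserved at every successor stage, and the explicit limit construction in Lemma~\ref{minimalextension2} (together with the fact that a union of a chain of initial segments of a tree node is again an initial segment) secures (ii) at limit stages up through $\delta$. Interleaved with these moves, standard bookkeeping meets the $M$-dense sets needed for (i).

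Granting (i) and (ii), tree-transitivity of $\supp$ yields $\supp(\vec{f^r}(\alpha), \vec{f^r}(\beta)) = \theta$ for every pair $\alpha<\beta$ in $[\alpha_{i_0},\delta]$: indeed, both $\vec{f^r}(\alpha)(\tau)$ and $\vec{f^r}(\beta)(\tau)$ are initial segments of $\vec{f^r}(\delta)(\tau)$ for all $\tau<\theta$, so $\vec{f^r}(\alpha)(\tau) \s \vec{f^r}(\beta)(\tau)$. By elementarity and unboundedness of $\dot A$ in $\kappa$, the set $\{\alpha_j : j \in M\cap\kappa\}$ is unbounded in $\delta$, so I may pick $j_0 \in (M\cap\kappa)\setminus\{i_0\}$ with $\alpha_{j_0}>\alpha_{i_0}$. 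Then $r \Vdash \alpha_{i_0},\alpha_{j_0} \in \dot A$ and $r \Vdash X_\xi \s \theta = \supp(\vec{f^G}(\alpha_{i_0}), \vec{f^G}(\alpha_{j_0}))$, so $r$ forces $\alpha_{i_0}$ and $\alpha_{j_0}$ to be compatible in $\dot{\mathbb A}^\kappa_{\mathbf X,\xi}$ --- the desired contradiction.

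The main obstacle is to arrange the bookkeeping so that the density meetings required for (i) are compatible with maintaining (ii). The dangerous moves are precisely the density-meeting extensions: they might introduce intermediate levels above $\alpha_{i_0}$ at which $\theta$-support to $\alpha_{i_0}$ is lost, thereby placing some $\alpha_j$'s outside the $\theta$-support zone. The resolution, and this is the technical heart of the argument, is to ensure that every density meeting can be realized as a single invocation of Lemma~\ref{onestepextension2} with $\beta := \alpha_{i_0}$, where the freshly added top-level node of that one-step extension is chosen both to preserve $\theta$-support to $\alpha_{i_0}$ and to encode the decision of some $\dot\alpha_j$ as $\eta_{T^{q}}$ for the current condition $q$. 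This is where the mutual exclusivity of the ascent-path values (Definition~\ref{def212}) and clause~\ref{keyclause} of Definition~\ref{def31} become essential: they give the construction enough freedom in the $\theta\setminus X_\xi$ coordinates of the new top node to absorb the density requirement without disturbing the $X_\xi$-coordinates that drive (ii).
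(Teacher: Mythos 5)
There is a genuine gap, and it sits exactly at what you call the ``technical heart.'' Your requirement (ii) --- that $\supp(\vec{f^r}(\alpha_{i_0}),\vec{f^r}(\gamma))=\theta$ for \emph{every} $\gamma\in[\alpha_{i_0},\delta]$ --- is not achievable. First, the levels between $\alpha_{i_0}$ and the height of the condition current at the moment $\alpha_{i_0}$ is distinguished are already frozen, and their support to $\alpha_{i_0}$ is only guaranteed to lie in $\mathcal F$. Second, and more seriously, deciding the name $\dot\alpha_j$ requires passing to an \emph{arbitrary} extension, and the many intermediate levels that such an extension introduces again carry only $\mathcal F$-support to $\alpha_{i_0}$. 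Your proposed remedy --- realizing each density meeting as a single invocation of Lemma~\ref{onestepextension2} anchored at $\beta:=\alpha_{i_0}$ --- cannot work: a one-step extension adds exactly one level of a prescribed shape, and there is no reason such a condition decides $\dot\alpha_j$. What \emph{can} be maintained (by interleaving one-step extensions with $\beta:=\alpha_{i_0}$ after each deciding extension, and using Lemma~\ref{minimalextension2} at limits) is full support from $\alpha_{i_0}$ to the checkpoint \emph{tops} $\eta_{T^q}$ and to $\delta$. But that does not help, because the decided value $\alpha_{j_0}$ lands at one of the uncontrolled intermediate levels, so $\supp(\vec{f^r}(\alpha_{j_0}),\vec{f^r}(\delta))$ --- and hence $\supp(\vec{f^r}(\alpha_{i_0}),\vec{f^r}(\alpha_{j_0}))$ --- is only known to contain some $X_{\xi'}$ with possibly $\xi'>\xi$, which does not yield compatibility in $\mathbb A^\kappa_{\mathbf X,\xi}$. (Your ``tree-transitivity'' step is also backwards as stated: clause (ii) gives each $\vec{f^r}(\gamma)(\tau)$ a common \emph{lower} node $\vec{f^r}(\alpha_{i_0})(\tau)$, not a common upper node, so comparability between two intermediate levels does not follow; this is secondary, since for the single pair $(\alpha_{i_0},\alpha_{j_0})$ clause (ii) would suffice directly --- if it could be arranged.)

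The missing idea is the one the paper builds its entire proof around: you must exploit the \emph{maximality} of the antichain, which your argument never uses. Maximality lets one, below any condition, find some $\alpha$ forced into $\dot A$ that is compatible with the current \emph{top} level $\eta_S$ --- a level the construction fully controls --- so that $X_\xi\s\supp(\vec g(\eta_S),\vec{g'}(\alpha))$ is anchored where it can be propagated. The paper's Claim~\ref{Sealinclaim2} then ``seals'' the condition: for every possible type $(\vec x,Y,\pi)$ describing how a future level could relate to the current top modulo $=^*$ and an $\mathcal I$-small permutation, it pre-commits an element of $\dot A$ with the right support relations; iterating this $\theta^+$ times and classifying the restriction to $\eta_{\theta^+}$ of any future $\beta\in\dot A$ shows that $\beta$ would properly extend a pre-committed $\alpha$, so the maximal antichain is in fact \emph{bounded}. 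Your plan of steering two named elements of an arbitrary antichain into compatibility has no substitute for this mechanism, and I do not see how to repair it without essentially reconstructing the sealing argument.
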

\begin{proof} The ``in particular'' part follows from the fact that $\zeta<\theta^+<\kappa$.
Now, fix a $\xi<\zeta$,
and let $\dot{A}$ be an $\mathbb S^\kappa_{\mathbf X}$-name for a maximal antichain in $\mathbb{\dot{A}}^\kappa_{\mathbf X,\xi}$.
We shall need the following key claim concerning the poset $\mathbb S^\kappa_{\mathbf X}$.

\begin{claim}\label{Sealinclaim2} Let $\langle T,\vec{f}\rangle\in \mathbb S^\kappa_{\mathbf X}$.
Then there exists $\langle T',\vec{f'}\rangle\le\langle T,\vec{f}\rangle$
with $\supp(\vec{f}(\eta_T),\vec{f'}(\eta_{T'}))=\theta$ such that
for every triple $(\vec{x},Y,\pi)$ satisfying the following two
\begin{enumerate}
\item\label{2areq1} $\vec{x}=\langle x_\tau \mid \tau<\theta \rangle$ consists of mutually exclusive nodes of $T_{\eta_T}$;
\item\label{2areq2} $Y \in \mathcal I$ and $\pi:Y\rightarrow Y$ is an injection such that
\begin{itemize}
\item $x_\tau=\vec f(\eta_T)(\tau)$ for every $\tau\in\theta\setminus Y$;
\item $x_\tau=^*\vec f(\eta_T)(\pi(\tau))$ for every $\tau \in Y$.
\end{itemize}
\end{enumerate}
there exists an $\alpha<\eta_{T'}$ such that
\begin{itemize}
\item $\langle T',\vec{f'}\rangle\Vdash_{\mathbb S^\kappa_{\mathbf X}}\alpha\in \dot A$;
\item $X_\xi\setminus Y\s \supp(\vec{f'}(\alpha),\vec{f'}(\eta_{T'}))$;
\item $\vec{f'}(\alpha)(\tau) \s x_\tau*\vec{f'}(\eta_{T'})(\pi(\tau))$ for every $\tau \in X_\xi\cap Y$.
\end{itemize}
\end{claim}
\begin{proof} By our assumptions, $\mu:=\max\{|T_{\eta_{T}}|,\theta\}^\theta$ is smaller than $\kappa$.
Fix a bijection $h$ from the set of all even ordinals in $\mu$ to the set of all triples $(\vec{x},Y,\pi)$ satisfying \eqref{2areq1} and \eqref{2areq2}.
We shall construct a sequence $\langle (T^{j},\vec{f}^{j},\vec{z}^j)\mid j<\mu \rangle$
in such a way that all of the following hold:
\begin{enumerate}[label=(\roman*)]
\item\label{2ac1} $\langle T^{0},\vec{f^0}\rangle=\langle T,\vec f\rangle$;
\item for all $i<j<\mu$, $\langle T^{j},\vec{f}^{j}\rangle\le \langle T^i,\vec{f}^i\rangle$ and $X_\xi\s\supp(\vec f^{i}(\eta_{T^i}), \vec f^{j}(\eta_{T^j}))$;
\item\label{2aa1} for every pair $i<j$ of even ordinals below $\mu$, $\supp(\vec f^{i}(\eta_{T^i}), \vec f^{j}(\eta_{T^j}))=\theta$;
\item for every nonzero even $j<\mu$, $\vec{z}^j:(j,\mu]\rightarrow T^j$, where $j<\iota<\mu$ implies:
\begin{itemize}
\item $\vec{z}^j(\iota)$ belongs to the top level of $T^j$;
\item $\vec{z}^j(\iota)\not=^*\vec{z}^j(\iota')$ for every $\iota'\in(j,\mu]\setminus\{\iota\}$;
\item $\vec{z}^j(\iota)\not=^*\vec{f}^j(\eta_{T^j})(0)$;
\item $\vec{z}^j(\iota)$ and $\vec{f}^j(\eta_{T^j})(\tau)$ are mutually exclusive for every nonzero $\tau<\theta$;
\item $\vec z^i(\iota) \s \vec z^j(\iota)$ for every nonzero even $i<j$ and every $\iota\in(j,\mu]$.
\end{itemize}
\item\label{2ab1} for every even $i<\mu$,
letting $(\langle x_\tau\mid \tau<\theta\rangle,Y,\pi):=h(i)$, there exists an $\alpha\le\eta_{T^{i+1}}$ such that
\begin{enumerate}
\item $\langle T^{i+1},\vec{f}^{i+1}\rangle\Vdash_{\mathbb S^\kappa_{\mathbf X}}\alpha\in \dot A$;
\item\label{2subb} $X_\xi\setminus Y\s \supp(\vec{f}^{i+1}(\alpha),\vec{f}^{i+1}(\eta_{T^{i+1}}))$;
\item\label{2subc} $\vec{f}^{i+1}(\alpha)(\tau)\s x_\tau*\vec{f}^{i+1}(\eta_{T^{i+1}})(\pi(\tau))$
for every $\tau \in X_\xi\cap Y$.
\end{enumerate}
\end{enumerate}

We start by setting $(T^{0},\vec f^0,\vec z^0):=(T,\vec f,\emptyset)$.
Next, suppose that $j\in(0,\mu]$ is such that $\langle (T^{i},\vec{f}^{i},\vec{z}^i)\mid i<j \rangle$ has already been defined.

{\bf Case 1: $j=i+1$ for an even ordinal $i$:}
Let $(\langle x_\tau \mid \tau <\theta \rangle,Y,\pi):=h(i)$.
By \eqref{2areq2}, $\langle \vec f(\eta_T)(\tau)\mid \tau\in\theta\setminus Y\rangle{}^\smallfrown\langle x_\tau\mid \tau\in Y\rangle$ coincides with
$\langle x_\tau \mid \tau <\theta \rangle$. So, by \eqref{2areq1}, it consists of mutually exclusive nodes.
By Clauses \ref{2ac1} and \ref{2aa1}, $\supp(\vec{f}(\eta_{T}),\vec{f^i}(\eta_{T^i}))=\theta$.
So, since $Y\in\mathcal I$, we can take a one-step extension $\langle S,\vec g\rangle$ of $\langle T^{i}, \vec{f}^{i} \rangle$
such that
\begin{itemize}
\item $\theta\setminus Y\s \supp(\vec{f}^{i}(\eta_{T^i}),\vec g(\eta_S))$, and
\item for every $\tau \in Y$, $x_\tau*\vec{f}^{i}(\eta_{T^i})(\pi(\tau))\s \vec g(\eta_S)(\tau)$.
\end{itemize}

\begin{figure}
\begin{center}
\begin{tikzpicture}
\node (b) at(8,5.2){$\vec{f}^{j}(\eta_{T^j})(\pi(\tau))$};
\draw (8.2,3.6)--(8,4.8);
\draw(6,4.8)--(14,4.8);
\node (c) at(15,4.8){$\eta_{T^j}$};
\draw(6,4.2)--(14,4.2);
\node (d) at(15,4.2){$\eta_{S'}$};
\draw[red](6,3.6)--(14,3.6);
\node[red] (e) at(15,3.6){$\alpha \in A$};
\draw(6,2.5)--(14,2.5);
\node (f) at(15,2.5){$\eta_S$};
\draw(6,1.8)--(14,1.8);
\node (g) at(15,1.8){$\eta_{T^i}$};
\draw(6,1)--(14,1);
\node (h) at(15,1){$\eta_T$};
\node (i) at(8,0.7){$x_\tau$};
\node (j) at(10,0.7){$=^*$};
\node (k) at(12,0.7){$\vec{f}(\eta_T)(\pi(\tau))$};
\draw[dashed](8,1)--(7,1.8);
\draw (12,1)--(11,1.8);
\node (l) at(11.5,2.1){$\vec{f}^{i}(\eta_{T^i})(\pi(\tau))$};
\node[blue] (m) at(8.6,2.1){$x_\tau*\vec{f}^{i}(\eta_{T^i})(\pi(\tau))$};
\node (n) at(6.2,2.8){$\vec g(\eta_S)(\tau)$};
\draw (7,1.8)--(7,2.5);
\node (o) at(7.2,3.9){$\vec{g'}(\alpha)(\tau)$};
\draw (7,2.5)--(8.2,3.6);
\end{tikzpicture}
\caption{Case: $\tau \in X_\xi\cap Y$.}
\end{center}
\end{figure}
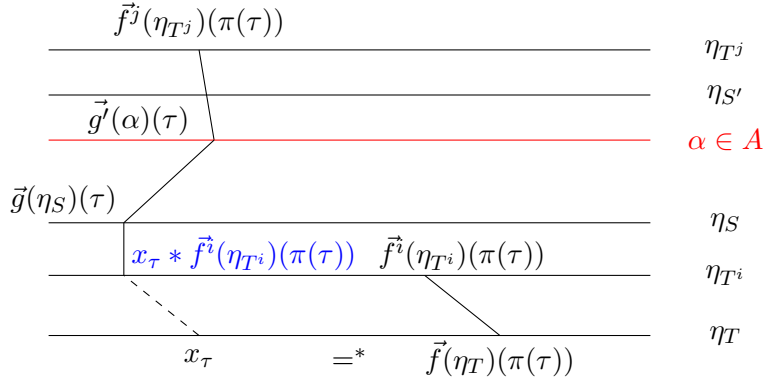
Recalling Definition~\ref{defA2},
and as $A$ is a maximal antichain, we may find a condition $\langle S', \vec{g'}\rangle\le\langle S,\vec g\rangle$
such that, for some $\alpha\le\eta_{S'}$,
\begin{itemize}
\item $\langle S',\vec{g'}\rangle\Vdash_{\mathbb S^\kappa_{\mathbf X}}\alpha\in \dot A$, and
\item $X_\xi\s \supp(\vec{g}(\eta_S),\vec{g'}(\alpha))$.
\end{itemize}

Fix an injection $\psi:\theta\setminus((X_\xi\setminus Y)\cup(X_\xi\cap\im(\pi)))\rightarrow\theta\setminus X_\xi$.\footnote{This is precisely where the hypothesis $|\theta\setminus X_0|=\theta$ comes into play.}
Denote $\alpha':=\max\{\eta_S,\alpha\}$.
Pick a one-step extension $\langle T^j,\vec{f^j}\rangle$ of $\langle S',\vec{g'}\rangle$
such that, for every $\tau<\theta$:
$$\vec{f}^{j}(\eta_{T^j})(\tau)=\begin{cases}
(\vec{g'}(\alpha')(\tau)*\vec{g'}(\eta_{S'})(\tau)){}^\smallfrown\langle \tau\rangle,&\text{if }\tau\in X_\xi\setminus Y;\\
(\vec{g'}(\alpha')(\pi^{-1}(\tau))*\vec{g'}(\eta_{S'})(\tau)){}^\smallfrown\langle\pi^{-1}(\tau)\rangle,&\text{if }\tau\in X_\xi\cap \im(\pi);\\
(\vec{g'}(\alpha')(\psi(\tau))*\vec{g'}(\eta_{S'})(\tau)){}^\smallfrown\langle\psi(\tau)\rangle,&\text{otherwise}.
\end{cases}$$

This ensures that the following two hold:
\begin{itemize}
\item $X_\xi\setminus Y\s \supp(\vec{g'}(\alpha),\vec{f}^{j}(\eta_{T^j}))$;
\item $\vec{g'}(\alpha)(\tau)\s x_\tau*\vec{f}^{j}(\eta_{T^j})(\pi(\tau))=\vec{f}^{j}(\eta_{T^j})(\pi(\tau))$ for every $\tau \in X_\xi\cap Y$.
\end{itemize}

{\bf Case 2: $j=2$:} Using Lemma~\ref{onestepextension2},
fix a one-step extension $\langle T^2,\vec{f}^2\rangle$ of $\langle T^1,\vec f^1\rangle$
such that $\nu_{T^2}\ge\mu$.
In particular, we may fix a map $\vec{z^2}:(2,\mu]\rightarrow T^2$, where $2<\iota<\mu$ implies:
\begin{itemize}
\item $\vec{z^2}(\iota)$ belongs to the top level of $T^2$;
\item $\vec{z^2}(\iota)\not=^*\vec{z^2}(\iota')$ for every $\iota'\in(2,\mu]\setminus\{\iota\}$;
\item $\vec{z^2}(\iota)\not=^*\vec{f}^2(\eta_{T^2})(0)$;
\item $\vec{z^2}(\iota)$ and $\vec{f}^2(\eta_{T^2})(\tau)$ are mutually exclusive for every nonzero $\tau<\theta$.
\end{itemize}

{\bf Case 3: $j=i+2$ for an even ordinal $i$:}
Using Lemma~\ref{onestepextension2},
let $\langle T^j,\vec{f^j}\rangle$ be a one-step $\eta_{T^i}$-extension of $\langle T^{i+1},\vec{f^{i+1}}\rangle$ with $\nu_{T^j}\ge\mu$.
As $\supp(\vec f^{i}(\eta_{T^i}), \vec f^{j}(\eta_{T^j}))=\theta$,
we may then fix a map $\vec{z}^j:(j,\mu]\rightarrow T^j$, where $j<\iota<\mu$ implies:
\begin{itemize}
\item $\vec{z}^j(\iota)$ belongs to the top level of $T^j$;
\item $\vec{z}^j(\iota)\not=^*\vec{z}^j(\iota')$ for every $\iota'\in(j,\mu]\setminus\{\iota\}$;
\item $\vec z^i(\iota)\s\vec{z}^j(\iota)$ and $\vec{z}^j(\iota)\not=^*\vec{f}^j(\eta_{T^j})(\tau)$;
\item $\vec{z}^j(\iota)$ and $\vec{f}^j(\eta_{T^j})(\tau)$ are mutually exclusive for every nonzero $\tau<\theta$.
\end{itemize}

{\bf Case 4: $j\in\acc(\mu+1)$:}
Obtain $(T^j,\vec{f^j},\vec z^j)$ by appealing to Lemma~\ref{minimalextension2}
with the sequence $\langle (T^i,\vec{f^i}, \vec{z}^i) \mid i<j\text{ nonzero even ordinal}\rangle$,
using $\delta:=\mu+1$.

\medskip

Once we are done with the recursion,
it is clear that $\langle T',\vec{f'}\rangle:=\langle T^\mu,\vec{f^\mu}\rangle$ is as sought.
\end{proof}

Let $\langle T,\vec{f}\rangle$ be an arbitrary condition in $\mathbb S^\kappa_{\mathbf X}$,
and we shall find an extension of it forcing that $A$ is bounded in $\kappa$.
To this end, we shall construct a sequence $\langle (T^{j},\vec{f}^{j},\vec{z}^j)\mid j<\theta^+ \rangle$
in such a way that all of the following hold:
\begin{enumerate}[label=(\roman*)]
\item $\langle T^{0},\vec{f^0}\rangle\le \langle T,\vec f\rangle$;
\item for all $i<j<\theta^+$, $(T^{j},\vec{f}^{j}\rangle\le (T^i,\vec{f}^i\rangle$ and $\supp(\vec f^{i}(\eta_{T^i}), \vec f^{j}(\eta_{T^j}))=\theta$;
\item for every $j<\theta^+$, $\vec{z}^j:(j,\theta^+]\rightarrow T^j$, where $j<\iota<\theta^+$ implies:
\begin{itemize}
\item $\vec{z}^j(\iota)$ belongs to the top level of $T^j$;
\item $\vec{z}^j(\iota)\not=^*\vec{z}^j(\iota')$ for every $\iota'\in(j,\theta^+]\setminus\{\iota\}$;
\item $\vec{z}^j(\iota)\not=^*\vec{f}^j(\eta_{T^j})(0)$;
\item $\vec{z}^j(\iota)$ and $\vec{f}^j(\eta_{T^j})(\tau)$ are mutually exclusive for every nonzero $\tau<\theta$;
\item $\vec z^i(\iota) \s \vec z^j(\iota)$ for every $i<j$ and every $\iota\in(j,\theta^+]$.
\end{itemize}
\end{enumerate}

We start by appealing to Lemma~\ref{onestepextension2} to find a condition $\langle T^0,\vec{f}^0\rangle\le \langle T,\vec f\rangle$ with $\nu_{T^0}\ge\theta^+$.
Then, fix a map $\vec{z}^0:(0,\theta^+]\rightarrow T^0$, where $0<\iota<\theta^+$ implies:
\begin{itemize}
\item $\vec{z}^0(\iota)$ belongs to the top level of $T^0$;
\item $\vec{z}^0(\iota)\not=^*\vec{z}^0(\iota')$ for every $\iota'\in(0,\theta^+]\setminus\{\iota\}$;
\item $\vec{z}^0(\iota)\not=^*\vec{f}^0(\eta_0)(0)$;
\item $\vec{z}^0(\iota)$ and $\vec{f}^0(\eta_0)(\tau)$ are mutually exclusive for every nonzero $\tau<\theta$.
\end{itemize}

Next, for every $i<\theta^+$ such that
$(T^i,\vec{f}^i,\vec{z}^i)$ has already been determined,
obtain $\langle T^{i+1},\vec f^{i+1}\rangle$ by appealing to Claim~\ref{Sealinclaim2}
with $\langle T^i,\vec{f}^i\rangle$.
By possibly replacing $\langle T^{i+1},\vec{f}^{i+1}\rangle$ with a one-step $\eta_{T^{i+1}}$-extension of it (using Lemma~\ref{onestepextension2}), we may also assume that $\eta_{T^{i+1}}$ is a successor ordinal and $\nu_{T^{i+1}}\ge\theta^+$.
Consequently, we may construct $\vec z^{i+1}$ satisfying requirement~(iii) for $j:=i+1$.
At limit stages, we obviously use Lemma~\ref{minimalextension2} with $\delta:=\theta^++1$.
Hereon, for notational simplicity, denote $\eta_{T^i}$ by $\eta_i$.

We claim that $\langle T^{\theta^+},\vec{f}^{\theta^+}\rangle$ forces that $A$ is bounded in $\kappa$, and in fact $A\s\eta_{\theta^+}$.
Suppose not, and pick a condition $\langle S,\vec{g}\rangle \le\langle T^{\theta^+},\vec{f}^{\theta^+}\rangle$ that
forces some $\beta\geq \eta_{\theta^+}$ to be in $A$. Without loss of generality, $\beta\le\eta_S$.

Consider $Y:=\theta\setminus\supp(\vec{g}(\beta),\vec{f}^{\theta^+}(\eta_{\theta^+}))$ which is an element of $\mathcal I$.
Recalling that we have invoked Lemma~\ref{minimalextension2} with $\delta:=\theta^++1$,
every element of $T_{\eta_{\theta^+}}$ is equal modulo bounded to $\vec{f}^{\theta^+}(\eta_{\theta^+})(\tau)$ for some $\tau<\theta$.
Consequently, we may fix a map $\pi:Y\rightarrow\theta$ such that
$\vec{g}(\beta)(\tau)\restriction \eta_{\theta^+}=^*\vec{f}^{\theta^+}(\eta_{\theta^+})(\pi(\tau))$ for every $\tau \in Y$.
As the elements of $\vec g(\beta)$ are mutually exclusive, it follows that $\pi$ is an injection from $Y$ to $Y$.
To summarize, we have found a $Y\in\mathcal I$ and an injection $\pi:Y\rightarrow Y$ such that the following two hold:
\begin{enumerate}[label=(\arabic*)]
\item\label{c1} $\vec{g}(\beta)(\tau)\restriction \eta_{\theta^+}=\vec{f}^{\theta^+}(\eta_{\theta^+})(\tau)$ for every $\tau\in\theta\setminus Y$;
\item $\vec{g}(\beta)(\tau)\restriction \eta_{\theta^+}=^*\vec{f}^{\theta^+}(\eta_{\theta^+})(\pi(\tau))$ for every $\tau \in Y$.
\setcounter{oldenumi}{\value{enumi}}
\end{enumerate}

Next, as $\dom(\vec{g}(\beta))=\theta$, we may find a large enough $i<\theta^+$ such that
\begin{enumerate}[label=(\arabic*)]
\setcounter{enumi}{\value{oldenumi}}
\item\label{c3} $\vec{g}(\beta)(\tau)\restriction \eta_{\theta^+}=(\vec{g}(\beta)(\tau)\restriction\eta_i)*\vec{f}^{\theta^+}(\eta_{\theta^+})(\pi(\tau))$ for every $\tau \in Y$.
\setcounter{oldenumi}{\value{enumi}}
\end{enumerate}
In particular:
\begin{enumerate}[label=(\arabic*)]
\setcounter{enumi}{\value{oldenumi}}
\item $\vec{g}(\beta)(\tau)\restriction \eta_{i+1}=\vec{g}(\eta_{i+1})(\tau)$ for every $\tau\in\theta\setminus Y$, and
\item $\vec{g}(\beta)(\tau)\restriction \eta_{i+1}=^*\vec{g}(\eta_{i+1})(\pi(\tau))$ for every $\tau \in Y$.
\setcounter{oldenumi}{\value{enumi}}
\end{enumerate}

Consider $\vec{x}:=\langle \vec{g}(\beta)(\tau) \restriction \eta_{i+1} \mid \tau <\theta \rangle$.
Then the triple $(\vec x,Y,\pi)$ satisfies requirements \eqref{2areq1} and \eqref{2areq2} with respect to $\langle T^{i+1},\vec{f^{i+1}}\rangle$.
Therefore,
there exists an $\alpha<\eta_{i+2}$ such that
\begin{enumerate}[label=(\arabic*)]
\setcounter{enumi}{\value{oldenumi}}
\item $\langle T^{i+2},\vec{f}^{i+2}\rangle\Vdash_{\mathbb S^\kappa_{\mathbf X}}\alpha\in \dot A$;
\item\label{c7} $X_\xi\setminus Y\s \supp(\vec{f}^{i+2}(\alpha),\vec{f}^{i+2}(\eta_{i+2}))$;
\item\label{c8} $\vec{f}^{i+2}(\alpha)(\tau) \s x_\tau*\vec{f}^{i+2}(\eta_{i+2})(\pi(\tau))$ for every $\tau \in X_\xi\cap Y$.
\setcounter{oldenumi}{\value{enumi}}
\end{enumerate}

By Clauses \ref{c7} and \ref{c1}, for every $\tau\in X_\xi\setminus Y$,
$$\vec{f}^{i+2}(\alpha)(\tau)\s \vec{f}^{i+2}(\eta_{i+2})(\tau)\s \vec{f}^{\theta^+}(\eta_{\theta^+})(\tau)\s \vec{g}(\beta)(\tau).$$

By Clauses \ref{c8} and \ref{c3}, for every $\tau \in X_\xi\cap Y$, we have
$$\begin{aligned}
\vec{f}^{i+2}(\alpha)(\tau)&\s x_\tau*\vec{f}^{i+2}(\eta_{i+2})(\pi(\tau))\\
&= (\vec{g}(\beta)(\tau)\restriction \eta_{i+1})*\vec{f}^{i+2}(\eta_{i+2})(\pi(\tau))\\
&\s(\vec{g}(\beta)(\tau)\restriction \eta_{i+1})*\vec{f}^{\theta^+}(\eta_{\theta^+})(\pi(\tau))\\
&=\vec{g}(\beta)(\tau)\restriction \eta_{\theta^+}\s \vec g(\beta)(\tau).
\end{aligned}$$

As $\vec{f^{i+2}}\s\vec{f}^{\theta^+}\s\vec g$,
we infer that $\langle S,\vec{g} \rangle$ forces that $\beta\in A$ is a proper extension of $\alpha\in A$, contradicting the fact
that $A$ is an antichain.
\end{proof}

\begin{cor}\label{lem6.5}
For every $\mathbb S^\kappa_{\mathbf X}$-generic filter $G$:
\begin{enumerate}[label=(\arabic*)]
\item $T(G)$ is a $\kappa$-Souslin tree;
\item $V(T(G))$ is a club in $\kappa$;
\item $\vec{f}^G$ is a mutually exclusive $\mathcal{F}$-ascent path through $T(G)$;
\item $T(G)$ coincides with the tree generated by its ascent path. Furthermore, for every
$t\in T(G)$ and every $\xi<\zeta$, there are $\alpha<\kappa$ and $\tau\in X_\xi$ such that $t\s \vec{f^G}(\alpha)(\tau)$.
\end{enumerate}
\end{cor}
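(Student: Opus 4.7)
The plan is to verify the four clauses in turn, combining density arguments with a pigeonhole against the $\kappa$-chain condition of $\mathbb A^\kappa_{\mathbf X,\xi}$. Clause (3) is immediate: each $\langle T,\vec f\rangle\in G$ carries a mutually exclusive $\mathcal F$-ascent path through $T$ by Definition~\ref{def31}\ref{ascentpathclause}, and these pointwise properties descend to the end-extending union $\vec f^G$. For clause (2), ${<}\kappa$-strategic closure (Lemma~\ref{cor282}) combined with Lemma~\ref{onestepextension2} renders dense the set of conditions with $\eta_T$ a limit ordinal above any prescribed $\gamma<\kappa$; such $\eta_T$ lies in $V(T)\s V(T(G))$ by Definition~\ref{def31}\ref{vanishingclause}, yielding unboundedness. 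Closedness of $V(T(G))$ is inherited from the closedness of each $V(T)\cap(\eta_T+1)$, which is invariant under end-extension.

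For clause (4), the ``furthermore'' part subsumes the first assertion. Given $t\in T(G)$ at level $\beta$ and $\xi<\zeta$, I will establish density: fix $\tau_0\in X_\xi$ and adapt the one-step $\beta$-extension of Lemma~\ref{onestepextension2} by setting $\vec{f'}(\eta_T+1)(\tau_0):=(t*\vec f(\eta_T)(\tau_0))^{\frown}\langle\tau_0\rangle$, which extends $t$. For the other $\tau$'s, the default recipe of Lemma~\ref{onestepextension2} preserves mutual exclusivity with $\vec{f'}(\eta_T+1)(\tau_0)$ on the $\mathcal F$-large set where $t$ is mutually exclusive with $\vec f(\beta)(\tau)$ (Definition~\ref{def31}\ref{keyclause}); on the small ideal-exceptional set I substitute $u_\tau^{\frown}\langle\tau\rangle$ with $u_\tau=\vec f(\eta_T)(\tau')$ for suitably chosen distinct $\tau'$ from the good set, using the abundance $\nu_{T'}\ge\theta^+$ and the mutual exclusivity of $\vec f$ at level $\eta_T$.

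For clause (1), the $\kappa$-tree structure (height $\kappa$, levels of size ${<}\kappa$) follows from density and the bounds in Definition~\ref{def31}(1). For no $\kappa$-size antichain, fix $\xi<\zeta$ and suppose toward contradiction that $A\s T(G)$ is an antichain of size $\kappa$. By (4), for each $t\in A$ pick $\alpha_t<\kappa$ and $\tau_t\in X_\xi$ with $t\s\vec{f^G}(\alpha_t)(\tau_t)$. Pigeonhole on $|X_\xi|\le\theta<\kappa$ yields $A^*\s A$ of size $\kappa$ with $\tau_t=\tau^*$ constant. The map $t\mapsto\alpha_t$ is injective on $A^*$ (a shared $\alpha$ would place $t,t'$ below a common $\vec{f^G}(\alpha)(\tau^*)$, forcing comparability), and $\{\alpha_t\mid t\in A^*\}$ is an antichain in $\mathbb A^\kappa_{\mathbf X,\xi}$: any common extension $\beta$ would give $t,t'\s\vec{f^G}(\beta)(\tau^*)$, again forcing comparability. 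This contradicts Lemma~\ref{chainconditionlemma2}.

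The main obstacle is the no-$\kappa$-branch clause. Given a $\kappa$-branch $b$, a first pigeonhole via (4) with $\xi=0$ yields $\tau^*\in X_0$ and a cofinal $C\s\kappa$ with $b\restriction\alpha\s\vec{f^G}(\beta_\alpha)(\tau^*)$ on $C$. Using $\bigcap_\xi X_\xi=\emptyset$, I pick $\xi^*<\zeta$ with $\tau^*\notin X_{\xi^*}$ and apply (4) again to extract $\tau^{**}\in X_{\xi^*}$ (hence $\tau^{**}\ne\tau^*$) together with a cofinal $C'$ satisfying $b\restriction\alpha\s\vec{f^G}(\beta'_\alpha)(\tau^{**})$ on $C'$. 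A further pigeonhole on the least $\eta<\zeta$ with $X_\eta\s\supp(\vec{f^G}(\beta_\alpha),\vec{f^G}(\beta'_\alpha))$ (bounded because $\zeta<\theta^+$), combined with a prudent upfront choice of $\xi^*$ sufficiently large, should place $\tau^{**}$ into this support. This gives $\vec{f^G}(\beta'_\alpha)(\tau^{**})\restriction\beta_\alpha=\vec{f^G}(\beta_\alpha)(\tau^{**})$, so that $\vec{f^G}(\beta_\alpha)(\tau^*)$ and $\vec{f^G}(\beta_\alpha)(\tau^{**})$ are two mutually exclusive ascent-path values at level $\beta_\alpha$ both extending $b\restriction\alpha$, the desired contradiction.
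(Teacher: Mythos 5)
Your treatment of clauses (2), (3), (4) and of the antichain half of clause (1) is essentially the paper's own argument: closedness and unboundedness of $V(T(G))$ via Definition~\ref{def31}\ref{vanishingclause} and the density of limit-height conditions, clause (3) read off from Definition~\ref{def31}\ref{ascentpathclause}, clause (4) by a density argument through a modified one-step extension exploiting Definition~\ref{def31}\ref{keyclause}, and the antichain case by pigeonholing onto a single $\tau^*\in X_0$ and transferring the antichain to $\mathbb A^\kappa_{\mathbf X,0}$ against Lemma~\ref{chainconditionlemma2}. All of that is fine.

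The last paragraph, however, contains a genuine gap. You need $\tau^{**}$ to land in $\supp(\vec{f^G}(\beta_\alpha),\vec{f^G}(\beta'_\alpha))$, i.e.\ you need $X_{\xi^*}\s X_{\eta^*}$ where $\eta^*$ is the index produced by your final pigeonhole. But $\eta^*$ is computed from the ordinals $\beta'_\alpha$, which are themselves obtained by applying clause (4) with the parameter $\xi^*$; so $\eta^*$ is only available \emph{after} $\xi^*$ has been fixed, and nothing prevents $\eta^*>\xi^*$ (when $\zeta$ is a limit, as in Section~\ref{sect7} where $\zeta=\omega$, no ``sufficiently large upfront choice'' exists). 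Iterating the choice does not terminate, and the weaker requirement that merely one of $\tau^*,\tau^{**}$ lie in the support is equally unguaranteed, since all one knows about the support is that it contains \emph{some} $X_\eta$. Fortunately this entire paragraph is unnecessary: $T(G)$ is normal and everywhere splitting (every node of the top level of a condition acquires at least $\theta$ immediate successors in any one-step extension, by the proof of Lemma~\ref{onestepextension2}), so a $\kappa$-branch $b$ would yield the $\kappa$-sized antichain $\{x_\alpha\mid\alpha<\kappa\}$, where $x_\alpha\in T(G)_{\alpha+1}$ extends $b\restriction\alpha$ but differs from $b\restriction(\alpha+1)$. Thus the no-branch clause reduces to the antichain clause you have already established, which is how the paper implicitly handles it.
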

\begin{proof} (1) $T(G)$ is a $\kappa$-tree thanks to Lemmas \ref{onestepextension2} and \ref{cor282}.
Towards a contradiction, suppose that $T(G)$ admits a $\kappa$-sized antichain.
It then follows from the upcoming Clause~(4) that there exists an injective sequence $\langle (\alpha_\gamma,\tau_\gamma)\mid \gamma<\kappa\rangle$ of elements of $\kappa\times X_0$
such that $\{ \vec{f^G}(\alpha_\gamma)(\tau_\gamma)\mid \gamma<\kappa\}$
is an antichain in $T(G)$. Find a $\tau\in X_0$ for which $\Gamma:=\{\gamma<\kappa\mid \tau_\gamma=\tau\}$ has size $\kappa$.
Then $\{\alpha_\gamma\mid\gamma\in\Gamma\}$ is a $\kappa$-sized antichain in $\mathbb A^\kappa_{\mathbf X,0}$,
contradicting Lemma~\ref{chainconditionlemma2}.

(2) By Definition~\ref{def31}\ref{vanishingclause},
it suffices to prove that for every $\varepsilon<\kappa$, there exists a $\langle T,\vec f\rangle\in G$ such that $\eta_T\in\acc(\kappa\setminus\varepsilon)$.
Now, this follows from Lemma~\ref{minimalextension2}, as demonstrated by the proof of Lemma~\ref{Strclosure2}.

(3) By Definition~\ref{def31}\ref{ascentpathclause}.

(4) By a density argument, using Definition~\ref{def31}\ref{keyclause}.
\end{proof}

\section{On the verge of weak compactness}\label{sect7}

For the purpose of this section, we fix an $\mathbf X=\langle X_n\mid n<\omega\rangle$ satisfying the following requirements:
\begin{itemize}
\item $\mathbf X$ is a $\s$-decreasing sequence of nonempty subsets of $\omega$;
\item $|\omega\setminus X_0|=|X_0\setminus X_1|=\omega$;
\item $\bigcap_{n<\omega}X_n=\emptyset$.
\end{itemize}
As in the previous section, we denote by $\mathcal F$ the filter generated by $\mathbf X$,
and we derive the corresponding notions of forcing of Definitions \ref{def31} and \ref{defA2}.
The next result is the main theorem of the paper.

\begin{thm}\label{thm77}
Suppose that $\kappa$ is a weakly compact cardinal. In some forcing extension, $\kappa$ is a strongly inaccessible cardinal
satisfying $\chi(\kappa)=1$,
and there exists a $\kappa$-Souslin tree $T$ such that $V(T)=\acc(\kappa)$.
\end{thm}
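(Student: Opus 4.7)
The plan is to carry out a Kunen-style iteration that uses the forcings from Section~\ref{sec3} both preparatorily and at the top. Starting with $\kappa$ weakly compact in $V$, I define an Easton-support iteration $\mathbb{P}_\kappa = \langle \mathbb{P}_\alpha, \dot{\mathbb{Q}}_\alpha \mid \alpha < \kappa \rangle$ in which, at each $\alpha < \kappa$ that is inaccessible in $V^{\mathbb{P}_\alpha}$, $\dot{\mathbb{Q}}_\alpha$ names a suitable instance of $\dot{\mathbb{S}}^\alpha_{\mathbf{X}\restriction\alpha}$, capped at stage $\kappa$ by $\dot{\mathbb{Q}}_\kappa := \dot{\mathbb{S}}^\kappa_{\mathbf{X}}$. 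By Lemma~\ref{cor282} and standard Easton bookkeeping, this iteration is ${<}\kappa$-strategically closed on a dense subset and preserves $\kappa$ strongly inaccessible. Writing $V^* := V[G*g]$ and $T := T(g)$, Corollary~\ref{lem6.5} provides that $T$ is a uniformly homogeneous $\kappa$-Souslin tree carrying a mutually exclusive $\mathcal{F}$-ascent path, with $V(T)$ a club in $\kappa$. To upgrade this to $V(T) = \acc(\kappa)$, I would slightly strengthen clause~\ref{vanishingclause} of Definition~\ref{def31}, demanding $V(T) = \acc(\h(T))$, and rerun Lemmas~\ref{minimalextension2} and~\ref{Strclosure2}: the auxiliary sequences $\vec{z}^\beta$ already introduce vanishing branches at every limit stage of the strategic game, so by playing so as to realize every limit ordinal below $\kappa$ as a top level, every $\alpha \in \acc(\kappa)$ is forced into $V(T(g))$.

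The core of the theorem is verifying $\chi(\kappa) = 1$ in $V^*$. By Proposition~\ref{lemma69}, it suffices to show that every $\kappa$-tree $S \in V^*$ acquires a cofinal branch in some $\kappa$-cc forcing extension of $V^*$. Given a nice name $\dot{S}$ of size $\kappa$, the weak compactness of $\kappa$ in $V$ furnishes an elementary embedding $j : M \to N$ with $\crit(j) = \kappa$ between $\kappa$-models with $\dot{S}, \mathbb{P}_\kappa * \dot{\mathbb{Q}}_\kappa \in M$. Since $\crit(j) = \kappa$, the first $\kappa+1$ stages of $j(\mathbb{P}_\kappa * \dot{\mathbb{Q}}_\kappa)$ coincide with $\mathbb{P}_\kappa * \mathbb{Q}_\kappa$, so $G*g$ is generic over $N$ for that initial segment, yielding a preliminary lift $j : M[G*g] \to N[G*g]$.

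The crucial remaining task is extending this lift through the tail forcing (in stages $(\kappa, j(\kappa)]$) inside a $\kappa$-cc extension of $V^*$, and this is precisely where the mutually exclusive $\mathcal{F}$-ascent path is essential. Forcing over $V^*$ with $\mathbb{A}^\kappa_{\mathbf{X}}$, which is $\kappa$-cc by Lemma~\ref{chainconditionlemma2}, adds a cofinal branch of $T$ cohering along some $X_n$; iterating $\omega$-many times produces countably many pairwise mutually exclusive cofinal branches, precisely the data needed to assemble, at level $\kappa$ of the tail, a filter matching the ascent-path profile prescribed by $j(\mathbb{S}^\kappa_{\mathbf{X}})$. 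Combined with the ${<}\kappa^+$-strategic closure of the tail forcing as computed in $N[G*g]$ and the fact that $N[G*g]$ has only $\kappa$-many dense sets to meet, this permits a diagonal construction of the full tail generic $H$ and completes the lift to $\tilde{j} : M[G*g] \to N[G*g*H]$; any node of $\tilde{j}(S)$ at level $\kappa$ then projects to a cofinal branch of $S$ in the ambient $\kappa$-cc extension of $V^*$, as required. The main obstacle I anticipate is exactly this tail-generic construction: the countably many $\mathbb{A}$-branches must be woven together so as to respect both the filter $\mathcal{F}$ and the $\mathbf{X}$-indexed coherence demanded by successive tail stages of $j(\mathbb{P}_\kappa)$ between $\kappa$ and $j(\kappa)$, a bookkeeping task that is delicate but seems feasible given the flexibility encoded in Definition~\ref{def31} and the symmetry between the decreasing sequence $\mathbf{X}$ and its image under $j$.
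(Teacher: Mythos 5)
Your overall strategy --- an Easton-support iteration of the Section~\ref{sec3} posets up to a weakly compact $\kappa$, followed by a lift of an embedding $j$ with $\crit(j)=\kappa$ via a master condition for $j(\dot{\mathbb S}^\kappa_{\mathbf X})$ assembled from mutually exclusive cofinal branches added by the $\kappa$-cc poset $\mathbb A^\kappa_{\mathbf X}$ --- is the paper's strategy; routing the conclusion through Proposition~\ref{lemma69} rather than extracting the club directly from $j(\vec C)(\kappa)$ is only a difference of packaging. But there are two genuine gaps. First, your iteration uses only $\dot{\mathbb S}^\alpha_{\mathbf X}$ at each inaccessible $\alpha<\kappa$, whereas the paper forces with $\dot{\mathbb S}^\alpha_{\mathbf X}*\dot{\mathbb A}^\alpha_{\mathbf X}$, and this is not cosmetic. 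The branches $b_n:=\bigcup\{\vec f^g(\alpha)(n)\mid\alpha\in h\}$ from which the master condition $\langle T,\vec f\rangle$ is built come from a generic $h$ for $\mathbb A^\kappa_{\mathbf X,0}$; for $\langle T,\vec f\rangle$ to literally be an element of $\mathbb S^{j(\kappa)}_{\mathbf X}$ as computed in the $N$-side model, these branches must belong to that model. That happens only because the stage-$\kappa$ iterand of $j(\mathbb P_\kappa)$ is $\mathbb S^\kappa_{\mathbf X}*\mathbb A^\kappa_{\mathbf X}$, so that $G*g*h$ can be taken as the first $\kappa+1$ stages of the $N$-side generic. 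With your iteration, $h$ lives only on the $V$-side and the master-condition step collapses. Relatedly, ``iterating $\mathbb A^\kappa_{\mathbf X}$ $\omega$-many times'' is both unnecessary and unjustified: a single generic for $\mathbb A^\kappa_{\mathbf X,0}$ already yields all the $b_n$ for $n\in X_0$ simultaneously, pairwise mutually exclusive because each $\langle\vec f^g(\alpha)(n)\mid n<\omega\rangle$ is; by contrast Lemma~\ref{chainconditionlemma2} gives the $\kappa$-cc only for one application over the $\mathbb S$-extension, and after that application $T(g)$ is no longer Souslin, so any further application (let alone an $\omega$-fold iteration) would need a new chain-condition argument.

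Second, you never verify Clause~\ref{vanishingclause} of Definition~\ref{def31} for the master condition: since $\kappa$ is a limit ordinal, the new top level must make $\kappa$ a vanishing level of $T$. The paper arranges this by withholding one branch $b_{n_0}$ with $n_0\in X_0\setminus X_1$ from the top level while using the remaining branches, reindexed by a bijection $\pi:\omega\rightarrow X_0\setminus\{n_0\}$ fixing $X_1$, to define $\vec f(\kappa)$; this is exactly where the hypothesis $|X_0\setminus X_1|=\omega$ is consumed, and it is the mechanism that lets $V(T)=\acc(\kappa)$ coexist with the triviality of all $C$-sequences. Without it the lift to $j_2$ does not go through. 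Finally, your proposed strengthening of Clause~\ref{vanishingclause} to $V(T)=\acc(\h(T))$ is plausible but needs re-verification of Lemmas \ref{onestepextension2} and \ref{minimalextension2}; the paper avoids this by keeping the definition as is and upgrading ``$V(T(g))$ covers a club'' to the existence of a $\kappa$-Souslin tree with $V(T)=\acc(\kappa)$ via \cite[Lemma~2.5]{paper58}.
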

\begin{proof} For an ordinal $\tau$,
let $\mathbb P_\tau$ be the Easton-support iteration of length $\tau$ which forces
with $\mathbb S^\eta_{\mathbf X}*\mathbb A^\eta_{\mathbf X}$ at every strongly inaccessible cardinal $\eta<\tau$.
As usual, at $\eta<\tau$ that is not strongly inaccessible, we use trivial forcing.

Let $G_\kappa$ be $\mathbb P_\kappa$-generic over $V$.
Let $g$ be $\mathbb S^\kappa_{\mathbf X}$-generic over $V[G_\kappa]$
and let $h$ be $\mathbb A^\kappa_{\mathbf X, 0}$-generic over $V[G_\kappa][g]$.
Note that $G_{\kappa+1}:=G_\kappa*g*h$ is $\mathbb P_{\kappa+1}$-generic over $V$.

By Corollary~\ref{lem6.5}, in $V[G_\kappa][g]$, $T(g)$ is a uniformly homogeneous $\kappa$-Souslin tree
admitting an $\mathcal F$-ascent path $\vec f^g$, and $V(T(g))$ covers a club in $\kappa$.
By \cite[Lemma~2.5]{paper58}, then, $$V[G_\kappa][g]\models\text{``there exists a }\kappa\text{-Souslin tree }T\text{ such that }V(T)=\acc(\kappa)".$$
It thus remains to prove that $\chi(\kappa)\le 1$ holds in $V[G_\kappa][g]$.
To this end, let $\vec{C}=\langle C_\beta \mid \beta<\kappa \rangle$ be an arbitrary $C$-sequence in $V[G_\kappa][g]$.

Work in $V$. Let $\dot{C}$ be a $\mathbb P_\kappa*\dot{\mathbb S}^\kappa_{\mathbf X}$-name for $\vec{C}$.
Take a $\kappa$-model $M$ containing $\mathbb P_\kappa*\dot{\mathbb S}^\kappa_{\mathbf X}*\dot{\mathbb A}^{\kappa}_{\mathbf X}$ and $\dot{C}$.
As $\kappa$ is weakly compact, we may now pick a $\kappa$-model $N$
and a nontrivial elementary embedding $j_0:M \rightarrow N$ with critical point $\kappa$.
Hereafter, work in $V[G_{\kappa+1}]$.
\begin{claim} $j_0$ may be lifted to an elementary embedding
$$j_1:M[G_\kappa]\rightarrow N[G_{\kappa+1}*G_{tail}].$$
\end{claim}
\begin{proof} By \cite[Proposition~7.13 and Remark~7.14]{cummings},
$\mathbb P_\kappa$ has the $\kappa$-cc.
In addition, by Lemma~\ref{Strclosure2}, in $V[G_\kappa]$, $\mathbb S^\kappa_{\mathbf X}*\mathbb A^\kappa_{\mathbf X}$ has a dense set that is ${<}\kappa$-strategically closed.
So, from ${}^{<\kappa}N \s N$, we infer that ${}^{<\kappa}N[G_{\kappa+1}] \s N[G_{\kappa+1}]$.
Since we use nontrivial forcing only at strongly inaccessibles, Lemma~\ref{Strclosure2} implies that
the quotient forcing $j_0(\mathbb P_\kappa)/ G_{\kappa+1}$ is $\kappa$-strategically closed in $N[G_{\kappa+1}]$.
But ${}^{<\kappa}N[G_{\kappa+1}] \s N[G_{\kappa+1}]$, and hence $j_0(\mathbb P_\kappa)/ G_{\kappa+1}$ is $\kappa$-strategically closed.
As $N$ has no more than $\kappa$ many $\mathbb P_{\kappa+1}$-names, the model $N[G_{\kappa+1}]$ has no more than $\kappa$ many dense sets of $j_0(\mathbb P_\kappa)/ G_{\kappa+1}$.
It thus follows that a $j_0(\mathbb P_\kappa)/ G_{\kappa+1}$-generic over $N[G_{\kappa+1}]$ may be recursively constructed,
say $G_{tail}$, so that $N[G_{\kappa+1}][G_{tail}]$ is a $j_0(\mathbb P_\kappa)$-generic extension of $N$.
As $j_0$ is the identity map over $\mathbb P_\kappa$, Silver's criterion holds vacuously,
so we may lift $j_0$ to a $j_1:M[G_\kappa]\rightarrow N[G_{\kappa+1}*G_{tail}]$, as sought.
\end{proof}

\begin{claim} $j_1$ may be lifted to an elementary embedding
$$j_2:M[G_\kappa*g]\rightarrow N[G_{\kappa+1}*G_{tail}*g_{tail}].$$
\end{claim}
\begin{proof} For every $n\in X_0$, define $b_n:\kappa\rightarrow\kappa$ via:
$$b_n:=\bigcup\{\vec f^g(\alpha)(n)\mid \alpha\in h\}.$$
As $h$ is $\mathbb A^\kappa_{\mathbf X, 0}$-generic over $V[G_\kappa][g]$,
each such $b_n$ is a cofinal branch through $T(g)$.
Furthermore, the elements of $\langle b_n\mid n\in X_0\rangle$ are mutually exclusive.
Take $n_0 \in X_0\setminus X_1$ and then let
$$T:=T(g)\cup\{ x*b_n\mid x\in T(g), n\in X_0\ \&\ n \neq n_0\}.$$
Then $T$ is a uniformly homogeneous tree for which $b_{n_0}$ is a vanishing $\kappa$-branch.
Thus, $V(T)=V(T(g))\cup\{\kappa\}$.
Next, we extend the ascent path $\vec{f}^g$ by setting $\vec{f}:=\vec{f}^g\cup\{ (\kappa,\langle b_{\pi(n)}\mid n<\omega\rangle)\}$,
where $\pi:\omega \rightarrow X_0 \setminus \{n_0\}$ is some bijection such that $\pi \restriction X_1$ is the identity function.

Altogether, $\langle T, \vec{f} \rangle$ is a legitimate condition of $\mathbb S^{j_1(\kappa)}_{\mathbf X}$
as computed in $N[G_{\kappa+1}]$,
in particular, it is an element of the last iterand of $j_1(\mathbb P_\kappa *\dot{\mathbb S}^\kappa_{\mathbf X})/ G_{\kappa+1}*G_{tail}$.
In addition, $\langle T, \vec{f} \rangle$ extends every condition lying in $j_1[g]$.

Since ${}^{<\kappa}N[G_{\kappa+1}] \s N[G_{\kappa+1}]$ and $G_{tail}$ is a generic for a $\kappa$-strategically closed forcing,
${}^{<\kappa}N[G_{\kappa+1}*G_{tail}] \s N[G_{\kappa+1}*G_{tail}]$.
By Lemma~\ref{Strclosure2}, $\mathbb S^{j_1(\kappa)}_{\mathbf X}$ is ${<}j_1(\kappa)$-strategically closed in $N[G_{\kappa+1}*G_{tail}]$.
But ${}^{<\kappa}N[G_{\kappa+1}*G_{tail}] \s N[G_{\kappa+1}*G_{tail}]$, and hence $\mathbb S^{j_1(\kappa)}_{\mathbf X}$ is $\kappa$-strategically closed.
As $N$ has no more than $\kappa$ many $j_1(\mathbb P_{\kappa})$-names, the model $N[G_{\kappa+1}*G_{tail}]$ has no more than $\kappa$ many dense sets of
$j_1(\mathbb P_\kappa *\dot{\mathbb S}^\kappa_{\mathbf X})/ G_{\kappa+1}*G_{tail}$.
It thus follows that a $j_1(\mathbb P_\kappa *\dot{\mathbb S}^\kappa_{\mathbf X})/ G_{\kappa+1}*G_{tail}$-generic over $N[G_{\kappa+1}*G_{tail}]$ may be recursively constructed,
say $g_{tail}$, so that $N[G_{\kappa+1}*G_{tail}][g_{tail}]$ is a $j_1(\mathbb P_\kappa*\dot{\mathbb S}^\kappa_{\mathbf X})$-generic extension of $N$
and $\langle T,\vec f\rangle\in g_{tail}$.
By Silver's criterion, we may now lift $j_1$ to a $j_2:M[G_\kappa][g]\rightarrow N[G_{\kappa+1}*G_{tail}][g_{tail}]$, as sought.
\end{proof}

Let $j_2:M[G_\kappa][g]\rightarrow N[G_{\kappa+1}*G_{tail}*g_{tail}]$ be given by the preceding claim.
Clearly, $\vec C$ is in $M[G_\kappa][g]$, so that $j_2(\vec C)$ is a $C$-sequence over $j_2(\kappa)$.
Let $C_\kappa$ denote $j_2(\vec{C})(\kappa)$.

\begin{claim} For every $\alpha<\kappa$, there exists $\beta\ge\alpha$ such that $C_\kappa\cap\alpha=C_\beta\cap\alpha$.
\end{claim}
\begin{proof} Suppose not, and let $\alpha<\kappa$ be a counterexample.
Denote $A:=C_\kappa\cap\alpha$, and define a map $\varphi:\kappa\setminus\alpha\rightarrow\alpha$ via:
$$\varphi(\beta):=\min\{\varepsilon<\alpha\mid A\cap\varepsilon\neq C_\beta\cap\varepsilon\}.$$

By elementarity, $j_2(A)\cap j_2(\varphi)(\kappa)\neq C_\kappa\cap j_2(\varphi)(\kappa)$.
However, $j_2(A)=A=C_\kappa\cap\alpha\s\alpha>j_2(\varphi)(\kappa)$. This is a contradiction.
\end{proof}

Recall that $C_\kappa$ is a club in $\kappa$ lying in $V[G_\kappa][g][h]$.
As $h$ is $\mathbb A^\kappa_{\mathbf X}$-generic over $V[G_\kappa][g]$, and as $\mathbb A^\kappa_{\mathbf X}$ satisfies the $\kappa$-cc,
it follows that there exists a club $D \s C_\kappa$ with $D\in V[G][g]$.
Evidently, for every $\alpha<\kappa$, there exists a $\beta\ge\alpha$ such that $D \cap \alpha \s C_\beta \cap \alpha$.
\end{proof}
\begin{remark}
The proof of Theorem~\ref{thm77} may easily be adapted to show that if $\kappa$ is supercompact and $\theta$ is measurable cardinal below it,
then there is a forcing extension in which $\kappa$ is a $\theta$-strongly compact cardinal
and $\chi(\kappa)=1$. A second interesting aspect of the proof of Theorem~\ref{thm77} is that the final model is an intersection model in the sense of \cite{paper69}.
In particular, an adaptation of the proof of \cite[Corollary~4.7]{paper69} to generic elementary embeddings yields that in the model of Theorem~\ref{thm77}, every $\kappa$-Aronszajn tree contains an $\omega$-ascent path.
Note that this is the first example of a model obtained as the decreasing intersection of countably many models in which $\chi(\kappa)<\omega$.
\end{remark}

\begin{cor} Assuming the consistency of a weakly compact cardinal, it is consistent that for some inaccessible cardinal $\kappa$ (1) holds, but (2) fails:
\begin{enumerate}[label=(\arabic*)]
\item For every $C$-sequence $\langle C_\beta\mid\beta<\kappa\rangle$,
there exists a cofinal $\Delta\s\kappa$ such that, for every $\alpha<\kappa$, there exists a $\beta<\kappa$ with $\Delta\cap\alpha\s C_\beta$;
\item For every $C$-sequence $\langle C_\beta\mid\beta<\kappa\rangle$,
there exists a cofinal $\Delta\s\kappa$ such that, for every $\alpha<\kappa$, there exists a $\beta<\kappa$ with $\Delta\cap\alpha\s\nacc(C_\beta)$.\footnote{The shift of attention to $\nacc(C_\beta)$
amounts to replacing the quantification from ranging over $C$-sequences to ranging over ladder systems.}
\end{enumerate}
\end{cor}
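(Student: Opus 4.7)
The plan is to work in the model $V[G_\kappa][g]$ produced by the Main Theorem. Clause~(1) is literally the assertion $\chi(\kappa)\le 1$ (take the function $b:\kappa\to[\kappa]^1$ of Definition~\ref{defcnm} to be $b(\alpha):=\{\beta\}$ for the $\beta$ furnished by~(1)), so it follows from $\chi(\kappa)=1$.

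For the failure of~(2), I plan to build a $C$-sequence $\vec C$ using the $\kappa$-Souslin tree $T=T(g)$ with $V(T)=\acc(\kappa)$. Define $\gamma:\kappa\to\kappa$ by $\gamma_\delta:=|T\restriction\delta|$; since all levels of $T$ have cardinality less than the strongly inaccessible $\kappa$, this is a continuous strictly increasing function, so its fixed-point set $C^*:=\{\delta<\kappa\mid\gamma_\delta=\delta\}\setminus\{0\}$ is a club in $\kappa$ consisting of limit ordinals. Fix a level-preserving bijection $\pi:\kappa\to T$, i.e., $\pi$ sends each $[\gamma_\delta,\gamma_{\delta+1})$ onto $T_\delta$. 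For every $\beta\in C^*$, use $\beta\in V(T)$ to pick a vanishing $\beta$-branch $b_\beta$ of $T$, set $L_\beta:=\pi^{-1}[b_\beta]$ (a cofinal subset of $\beta$, by level-preservation of $\pi$ and $\gamma_\beta=\beta$), and put $C_\beta:=\overline{L_\beta}\cap\beta$. For $\beta\notin C^*$, choose $C_\beta$ to be any cofinal closed subset of $\beta$ with $\nacc(C_\beta)\cap C^*=\emptyset$; this is possible since $\kappa\setminus C^*$ is dense between successive elements of $C^*$.

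For the verification, observe first that $\nacc(C_\beta)\s L_\beta$ whenever $\beta\in C^*$: any point in $C_\beta\setminus L_\beta$ lies in $\acc(L_\beta)\s\acc(C_\beta)$. Now suppose, toward a contradiction, that $\Delta\s\kappa$ is cofinal and witnesses~(2) for $\vec C$. Replace $\Delta$ with $\Delta\cap C^*$, still cofinal and still satisfying~(2) with the same $\beta_\alpha$'s. For each $\alpha\in\Delta$ above $\min(\Delta)$, the corresponding $\beta_\alpha$ must lie in $C^*$, else $\nacc(C_{\beta_\alpha})\cap C^*=\emptyset$ contradicts $\min(\Delta)\in C^*\cap\Delta\cap\alpha\s\nacc(C_{\beta_\alpha})$. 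Hence $\Delta\cap\alpha\s\nacc(C_{\beta_\alpha})\s L_{\beta_\alpha}=\pi^{-1}[b_{\beta_\alpha}]$, and so $\pi[\Delta\cap\alpha]\s b_{\beta_\alpha}$, a chain in $T$. For any $x,y\in\pi[\Delta]$, pick $\alpha\in\Delta$ with $\pi^{-1}(x),\pi^{-1}(y)<\alpha$; then $x,y\in b_{\beta_\alpha}$ and are therefore $\subset$-comparable. So $\pi[\Delta]$ is a $\subset$-chain in $T$ of cardinality $\kappa$, producing a cofinal $\kappa$-branch and contradicting Souslinity of $T$.

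The main obstacle is the bookkeeping in the construction: specifically, arranging $C_\beta$ for $\beta\notin C^*$ so that $\nacc(C_\beta)\cap C^*=\emptyset$ while keeping $C_\beta$ cofinal and closed in $\beta$. A secondary, easier point is verifying that $\pi$ can be chosen level-preserving in the stated sense, which follows from $\kappa$ being strongly inaccessible together with $|T_\delta|<\kappa$ for every $\delta$.
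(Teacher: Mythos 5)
Your handling of Clause~(1) matches the paper's: it is just Definition~\ref{defcnm} with $\chi=1$ unwound. For the failure of Clause~(2), however, there is a genuine gap, and it sits exactly where you wave it through: the step ``replace $\Delta$ with $\Delta\cap C^*$, still cofinal.'' A set witnessing (2) is only required to be cofinal in $\kappa$; it may be disjoint from the club $C^*$ (for instance, it could consist entirely of successor ordinals), in which case $\Delta\cap C^*$ is bounded and the rest of your argument never starts. Worse, your construction does not in fact refute (2) against such $\Delta$: for $\beta\notin C^*$ you demand only that $C_\beta$ be closed and cofinal in $\beta$ with $\nacc(C_\beta)\cap C^*=\emptyset$, and this is satisfied by, e.g., $C_{\gamma+1}:=(\gamma+1)\setminus\{0\}$ for every limit $\gamma$, whose $\nacc$ is the set of all successor ordinals $\le\gamma$ (disjoint from $C^*$, which consists of limit ordinals). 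With that admissible choice, $\Delta:=\{\text{successor ordinals}\}$ witnesses (2) outright. So the problem is not the ``bookkeeping'' you flag at the end; it is that your $C$-sequence encodes tree information only at points of $C^*$, while (2) lets the adversary choose a $\Delta$ that ignores $C^*$ entirely. (There is also a small slip in taking $\gamma_\delta:=|T\restriction\delta|$ --- cardinalities are not strictly increasing and the intervals $[\gamma_\delta,\gamma_{\delta+1})$ need not have the right order type --- but that is repaired by a recursive definition and is not the real issue.)

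The paper takes a different route that circumvents precisely this difficulty: from the $\kappa$-Souslin tree with $V(T)=\acc(\kappa)$ it extracts, via the proof of \cite[Theorem~2.23]{paper48}, an instance $\clubsuit_{\ad}(\{D\},1,1)$ of the almost-disjoint guessing principle over a club $D$, and then invokes the proof of \cite[Theorem~2.30]{paper48} to conclude that Clause~(2) must fail. The extra content supplied by the guessing principle --- and not reproduced by your ``one vanishing branch per closure point'' coding --- is that it guesses \emph{arbitrary} cofinal subsets of $\kappa$, not merely those meeting a prescribed club cofinally. To make a self-contained argument work you would need either to prove such a guessing instance from the tree, or to redesign the $C$-sequence so that $\nacc(C_\beta)$ carries tree (chain or antichain) information for \emph{every} $\beta<\kappa$; as written, your chain-in-$T$ contradiction is available only when $\Delta\cap C^*$ is cofinal.
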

\begin{proof} Work in the model of Theorem~\ref{thm77}. We have $\chi(\kappa)=1$ which amounts to saying that Clause~(1) holds.

In addition, there exists a $\kappa$-Souslin tree $T$ such that $V(T)=\acc(\kappa)$.
An inspection of the proof of \cite[Theorem~2.23]{paper48} makes it clear that there exists a club $D\s\kappa$
such that for every partition $\mathcal S$ of $D$ into stationary sets, $\clubsuit_{\ad}(\mathcal S,\omega,{<}\omega)$ holds.
In particular, $\clubsuit_{\ad}(\{D\},1,1)$ holds.
An inspection of the proof of \cite[Theorem~2.30]{paper48} then yields that Clause~(2) must fail.
\end{proof}

\section*{Acknowledgments}

The first and second authors were supported by the Israel Science Foundation (grant agreement 203/22).
The results of this paper were presented by the second author in an invited talk at the Chinese Annual Conference on Mathematical Logic, July 2023,
and in a seminar talk at the Institute of Mathematics, Chinese Academy of Sciences, November 2023.

\end{document}